\newcommand{\pa}[1]{{\partial#1}}
\newcommand{\ga}{\alpha}
\newcommand{\gz}{\zeta}
\newcommand{\gt}{\theta}
\newcommand{\gk}{\kappa}
\newcommand{\gl}{\lambda}
\newcommand{\gs}{\sigma}
\def\XXint#1#2#3{{\setbox0=\hbox{$#1{#2#3}{\int}$}
     \vcenter{\hbox{$#2#3$}}\kern-.5\wd0}}
\theoremstyle{plain}
\newtheorem{theorem}{Theorem}[section]
\newtheorem{proposition}[theorem]{Proposition}
\newtheorem{corollary}[theorem]{Corollary}
\numberwithin{equation}{section}
\begin{document}

\title{On the Linearization of the First and Second Painlev\'e Equations}
\author{N.~Joshi\,$^{a)}$, A.~V.~Kitaev\,$^{b)}$, and P.~A.~Treharne\,$^{a)}$
\thanks{Email: N.Joshi@maths.usyd.edu.au, kitaev@pdmi.ras.ru, and P.Treharne@maths.usyd.edu.au}\\
\\
$^{a)}$School of Mathematics and Statistics, The University of Sydney, \\
New South Wales 2006, Australia, \\
$^{b)}$Steklov Mathematical Institute, Fontanka 27, St Petersburg, 191023, Russia}

\date{June 2, 2008}

\maketitle

\abstract
We found Fuchs--Garnier pairs in $3\times3$ matrices for the first and second Painlev\'e
equations which are linear in the spectral parameter.
As an application of our pairs for the second Painlev\'e equation we use the generalized Laplace transform
to derive an invertible integral transformation relating two its Fuchs--Garnier pairs in $2\times2$ matrices
with different singularity structures, namely, the pair due to Jimbo and Miwa and the one found by
Harnad, Tracy, and Widom. Together with the certain other transformations it allows us to relate all known
$2\times2$ matrix Fuchs--Garnier pairs for the second Painlev\'e equation with the original Garnier
pair.\vspace{24pt}\\
{\bf 2000 Mathematics Subject Classification:} 33E17, 34M25, 34M55. \vspace{24pt}

\noindent
Short title: Linearization of Painlev\'e I and II Equations\\
Key words: Painlev\'e Equations, Isomonodromy Deformations, Laplace Transform, Lax Pair, Stokes Phenomenon.

\newpage

\setcounter{page}2
\section{Introduction}
 \label{sec:intro}

The phrase ``linearization of the Painlev\'{e} equations'' is widely understood to refer to the fact that the
nonlinear Painlev\'{e} equations canbe associated with certain overdetermined systems of linear differential equations
in two complex variables.  The linear systems may be written as scalar or matrix equations and are typically referred
to as Lax pairs for the Painlev\'e equations.  In our previous paper \cite{JKT2007} we suggested calling these systems
the ``Fuchs--Garnier'' pairs for the Painlev\'e equations to pay tribute to the two scientists who first introduced
these systems in the beginning of the XX-th century.  In the period since Fuchs and Garnier first wrote their (scalar)
pairs the list has expanded so that now, for most of the Painlev\'{e} equations, there are several different
Fuchs--Garnier pairs associated with the same Painlev\'{e} equation.\footnote{Actually most of the ``new'' pairs
appeared as similarity reductions of various Lax pairs for nonlinear partial differential equations integrable via the Inverse Scattering Transform method which explains why they are often referred to as Lax pairs for the Painlev\'{e} equations.} The different Fuchs--Garnier pairs for a given Painlev\'e equation may differ from each other not only by simple gauge transformation but also by matrix dimension and/or analytic structure (the number and type of singular points). Our general belief is that all Fuchs--Garnier pairs for a given Painlev\'e equation should be equivalent in the sense that there should exist explicit transformations that map these pairs to each other. In many cases such transformations are known, however, there are still several instances of different Fuchs--Garnier
pairs whose equivalence is expected but not yet established. The main goal of this work is to construct such an explicit transformation between two Fuchs--Garnier pairs for the second Painlev\'e equation,
\begin{equation}
 \label{eq:P2}
P_2: \qquad \frac{d^{2}y}{dt^{2}} = 2y^{3} + t y + \ga
\end{equation}
where $\ga\in \mathbb{C}$ is a complex parameter. Both pairs were obtained in 1979-1980:~one pair is due to Flaschka and Newell \cite{FN1980} ($FN$-pair) and the other is due to Jimbo and Miwa \cite{JM1981II} ($JM_2$-pair). The $FN$-pair was originally obtained as a similarity reduction of the Lax pair for the modified KdV equation;  the $JM_2$-pair was originally obtained from the scalar pair of Garnier, although it can also be obtained as a similarity reduction of the Lax pair for the nonlinear Schr\"odinger equation.

The solution of the aforementioned problem associated with $P_{2}$ is intimately connected with the other central theme of this work, namely, the construction of the so-called secondary linearized Fuchs--Garnier pairs for $P_2$ and the first Painlev\'e equation,
\begin{equation}
 \label{eq:P1}
P_1: \quad \frac{d^{2}y}{dt^{2}} = 6y^{2} + t.
\end{equation}
The notion of secondary linearized Fuchs--Garnier pairs for the Painlev\'{e} equations was introduced in our previous work \cite{JKT2007} and refers to Fuchs--Garnier pairs which are \emph{linear} in the spectral parameter $\gl$, see system \eqref{eq:INT-secondary-linearization} below. However it is important to mention that the question concerning a relation between the $FN$- and  $JM_2$- pairs was one of the main motivations for both our works.

We recall that the matrix Fuchs--Garnier pairs for the Painlev\'e equations have the following form
\begin{equation}
 \label{eq:IMD}
\frac{dY}{d\gl} = \mathcal{A}(\gl,t) Y, \quad \frac{dY}{dt} = \mathcal{U}(\gl,t) Y,
\end{equation}
where $\gl \in \mathbb{C}$ is an auxiliary variable called the spectral parameter and $\mathcal{A}(\gl,t)$, $\mathcal{U}(\gl,t)\in GL(N,\mathbb C)$ are \emph{rational} functions of $\gl$ and are analytic in $t$.  Jimbo and Miwa~\cite{JM1981II} showed that for all Painlev\'e equations such pairs exist in $2\times2$ matrices.  The Frobenious compatibility condition of system \eqref{eq:IMD}
\begin{equation}
 \label{eq:compatibility}
\mathcal{A}_{t} - \mathcal{U}_{\gl} + [\mathcal{A},\mathcal{U}] = 0,
\end{equation}
where $[\,,]$ is the usual matrix commutator, being imposed identically for all values of $\gl$, is equivalent to one of the Painlev\'e equations.

As is mentioned above Fuchs--Garnier pairs can be scalar and matrix. The scalar pairs for $P_1$ and $P_2$ were obtained by Garnier~\cite{G1912}. All other scalar Fuchs--Garnier pairs that we know in the literature are related with the Garnier pairs via simple transformations. The situation with the matrix Fuchs--Garnier pairs for $P_1$ is also fairly easy to summarize.  The basic $2\times2$ matrix pair for $P_1$ ($JM_1$-pair) was found by Jimbo and Miwa~\cite{JM1981II}.  It is straightforward to prove that the $JM_{1}$-pair is equivalent to the scalar pair obtained by Garnier.  Moreover, the other $2\times2$ matrix pairs for $P_{1}$ that can be found in the literature are related with the $JM_1$-pair by the Fabri \cite{I1956} and Schlesinger transformations~\cite{JM1981II} (in the other terminology via quadratic $RS$-transformations~\cite{K2006}). As for the matrix Fuchs--Garnier pairs for $P_2$ the situation is more interesting.  To the best of our knowledge all matrix pairs for $P_2$ that were discussed so far in the literature are given in $2\times2$ matrices.  There are three different matrix pairs: the $FN$- and $JM_2$-pairs
mentioned already in the previous paragraph, and the one obtained by Harnad, Tracy, and Widom~\cite{HTW1993} ($HTW$-pair).  We give a detailed account of all these pairs in Section~\ref{sec:results}.  It is straightforward to establish that the Fabri transformation maps the $HTW$-pair into the $FN$-pair (see details in Section~\ref{sec:results}) and that the $JM_2$-pair is a matrix version of the scalar Garnier pair (see
Appendix~\ref{app:P2}). However, a direct link between the $FN$- (equivalently $HTW$-) and the $JM_2$- pairs is not that obvious; this link is one of the main matters of our paper.

Central to our investigation of this problem are the so-called secondary linearized Fuchs--Garnier pairs, the technique that we began to develop in our previous work \cite{JKT2007}.  In that work we found the secondary linearized Fuchs--Garnier pairs in $3\times3$ matrices for the third, fourth, and fifth Painlev\'e equations.  In this paper we complete the list of the secondary linearized pairs for the Painlev\'e equations by adding to it the pairs for $P_1$ and $P_2$.  The secondary linearized Fuchs--Garnier pair for the sixth Painlev\'e equation was known earlier due to Harnad~\cite{H1994} (see also Mazzocco~\cite{M2002}).

Secondary linearization is here taken to mean presenting each of the Painlev\'{e} equations in terms of a Fuchs--Garnier pair with equation on spectral parameter $\lambda$ of the following form,
\begin{equation}
 \label{eq:INT-secondary-linearization}
\big(\lambda B_1(t)+B_2(t)\big)\frac{d\Psi}{d\lambda}=\Big(\lambda B_3(t)+B_4(t)\Big)\Psi,
\end{equation}
i.e., with coefficients \emph{linear} with respect to $\gl$.  In \cite{JKT2007}, using the similarity reductions of the Lax pair for the three-wave resonant interaction (3WRI) system, we obtained secondary linearized Fuchs--Garnier pairs in $3\times3$ matrices for all the Painlev\'e equations except $P_1$ and $P_2$.  As there are no similarity reductions of the 3WRI system to $P_{1}$ and $P_{2}$ this approach could not be used to obtain secondary linearized pairs for the latter equations.  So in this paper we complete a list of the secondary linearized Fuchs--Garnier
pairs for the Painlev\'e equations.

The main advantage of the secondary linearized pairs is that the Laplace transform maps them one into another without changing the matrix dimension of the corresponding Fuchs--Garnier pairs.  This property is lost for Fuchs--Garnier pairs which are rational functions of the spectral parameter of degree more than 1.  In this work the fact that the matrix dimension is not altered is the key to constructing an explicit link between the $JM_{2}$- and the $FN$- pairs for $P_{2}$ since the two secondary linearized $3\times3$ matrix pairs that are related by the generalized Laplace transform can be reduced independently to the different $2\times2$ matrix pairs.  The reduction of our secondary linearized $3\times3$ matrix pairs to the $2\times2$ matrix Fuchs--Garnier pairs is done via two different mechanisms: (i) by a special normalization of equation~\eqref{eq:INT-secondary-linearization}; and (ii) from a degeneracy that occurs under application of the generalized Laplace transform to equation~\eqref{eq:INT-secondary-linearization}.  We call a secondary linearized Fuchs--Garnier pair \emph{degenerate} iff $\det\big(\lambda B_1(t)+B_2(t)\big)\equiv0$ for all $\gl\in\mathbb C$.  The reader will see that, in the case of $P_{1}$ and $P_{2}$, the generalized Laplace transform maps nondegenerate Fuchs--Garnier pairs into degenerate ones.

In Section~\ref{sec:P1} we present a nondegenerate secondary linearized Fuchs--Garnier pair for $P_1$ ($JKT_1$-pair).  We show that under the formal\footnote{We call a Laplace transform formal in case we do not specify its contour of integration.} Laplace transform it maps to a degenerate Fuchs--Garnier pair ($dJKT_1$-pair).  We then show that the $dJKT_{1}$-pair is equivalent to the $JM_1$-pair, which is, in turn, a matrix form of the original Garnier pair, $G_1$.  Schematically, the content of Section~\ref{sec:P1} can be described by the graph shown in
Figure~\ref{fig:P1}, where the vertices represent the corresponding Fuchs--Garnier pairs and the edges are invertible transformations relating them.
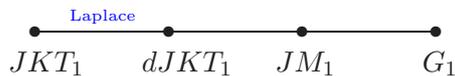
\begin{figure}[ht]
\begin{center}
\begin{picture}(200,50)
\put(25,20){\circle*{4.0}} \put(15,05){$JKT_1$} \put(25,20){\line(1,0){50}} \put(38,24){\color{blue}{\tiny Laplace}} \put(75,20){\circle*{4.0}}
\put(65,05){$dJKT_1$} \put(75,20){\line(1,0){50}} \put(125,20){\circle*{4.0}} \put(115,05){$JM_1$} \put(125,20){\line(1,0){50}}
\put(175,20){\circle*{4.0}} \put(170,05){$G_1$}
\end{picture}
\end{center}
\caption{The diagram of the Fuchs--Garnier pairs for $P_1$ and mappings between them.} \label{fig:P1}
\end{figure}

The rest of the paper is devoted to $P_2$. Schematically its content can be presented by the graph shown on Figure~\ref{fig:P2}.  The vertices and
the edges of the graph are different Fuchs--Garnier pairs for $P_2$ and the mappings between them, respectively.  The vertices abbreviated as
$JKT$ with sub- and superscripts denote $3\times3$ matrix Fuchs--Garnier pairs that we have constructed.  The subscript $\{2\}$ means the pair for
$P_2$, the prefix $d$, as above, says that the corresponding pair is degenerate, and the superscripts $\{1,2,3\}$ label different degenerate
Fuchs--Garnier pairs.  The graph is commutative and all mappings are invertible.  The edges without name correspond to the reduction
transformation from the $3\times3$ to $2\times2$ matrix and the $2\times2$ matrix to scalar Fuchs--Garnier pairs and their inverses.  Our main
result is the diagonal mapping indicated by the red edge.  It is obtained in two ways: as the composition of transformations along the upper and
lower roots connecting vertices $JM_2$ and $HTW$.  These compositions coincide which proves the commutativity of our diagram.
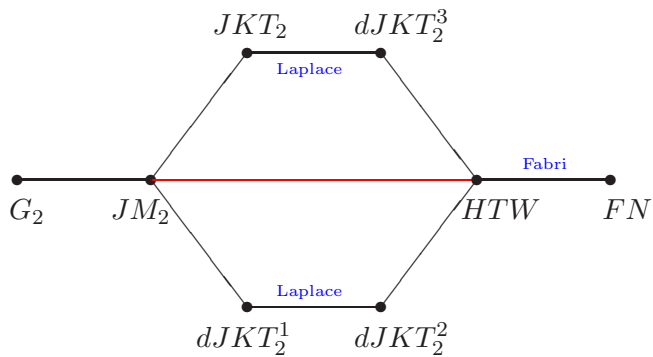
\begin{figure}[ht]
\begin{center}
\begin{picture}(250,120)
\put(22,45){$G_2$} \put(25,60){\circle*{4.0}} \put(25,60){\line(1,0){50}} \put(75,60){\circle*{4.0}} \put(75,60){\color{red}\line(1,0){120}}
\put(60,45){$JM_2$} \put(75,60){\line(3,4){35}} \put(75,60){\line(3,-4){35}} \put(111,108){\circle*{4.0}} \put(111,12){\circle*{4.0}}
\put(98,115){$JKT_2$} \put(92,-03){$dJKT_2^1$} \put(111,108){\line(1,0){50}} \put(122,100){\color{blue}{\tiny Laplace}}
\put(161,108){\circle*{4.0}} \put(111,12){\line(1,0){50}} \put(122,16){\color{blue}{\tiny Laplace}} \put(161,12){\circle*{4.0}}
\put(151,115){$dJKT_2^3$} \put(151,-03){$dJKT_2^2$} \put(161,108){\line(3,-4){35}} \put(161,12){\line(3,4){35}} \put(197,60){\circle*{4.0}}
\put(197,60){\line(1,0){50}} \put(191,45){$HTW$} \put(247,60){\circle*{4.0}} \put(244,45){$FN$} \put(214,64){\color{blue}{\tiny Fabri}}
\end{picture}
\end{center}
\caption{The commutative diagram of the Fuchs--Garnier pairs for $P_2$ and cor\-res\-pond\-ing mappings.} \label{fig:P2}
\end{figure}

This diagram is constructed in Sections~\ref{sec:results}-\ref{sec:dJKT2} and Appendices~\ref{app:P2} and \ref{app:contour}.

In Section~\ref{sec:results} we recall the main subjects of our study -- the $2\times2$ matrix Fuchs--Garnier pairs for $P_2$ due to Jimbo--Miwa
($JM_2$-pair), Flaschka--Newell ($FN$-pair), and Harnad--Tracy--Widom ($HTW$-pair).  We also show that the Fabri transformation~\cite{I1956},
which is natural to employ for the $HTW$-pair and is well-known in asymptotic theory, maps the $HTW$-pair to the $FN$-pair.  This fact was earlier
noticed in \cite{KH1999}.  Finally, we conclude this section with a presentation of the main result of this paper, i.e., we give a direct
invertible integral transformation mapping the $HTW$-pair to the $JM_2$-pair.  Appendix~\ref{app:P2} completes the general overview of the
$2\times2$ matrix pairs by showing a relation of the $JM_2$-pair to the scalar Garnier pair ($G_2$-pair).  Here we also consider one more
Fuchs--Garnier pair for $P_2$ in $2\times2$ matrices obtained by Conte and Musette~\cite{CM2000} ($CM_2$-pair) and show how it can be mapped
directly to the $JM_2$-pair without reference to the scalar $G_2$-pair.

In Section~\ref{sec:JKT2} we present two new $3\times3$ matrix secondary linearized Fuchs--Garnier pairs for $P_{2}$, one of which is
nondegenerate, together with the corresponding integral transformation between them and their reductions to the $JM_2$- and $HTW$- pairs.  Using
these results we construct a formal integral transform between the $JM_2$- and $HTW$- pairs.

In Section~\ref{sec:dJKT2} we present two other new $3\times3$ matrix secondary linearized Fuchs--Garnier pairs for $P_{2}$, different from those
in Section~\ref{sec:JKT2}.  Both pairs are degenerate and are related via the generalized Laplace transform.  As in the previous section we
construct transformations relating these pairs with the $JM_2$- and $HTW$- pairs and on this basis obtain exactly the same formal integral
transform between the latter pairs as in Section~\ref{sec:JKT2}.

Finally in Appendix~\ref{app:contour} we show how to find a contour of integration in our integral transformations which completes the proof of
our main result.

\section{Fuchs--Garnier pairs for $P_1$}
 \label{sec:P1}

In Subsection~\ref{subsec:P1-pairs} we construct the Fuchs--Garnier pairs and corresponding mappings presented by the graph on
Figure~\ref{fig:P1}.  In Subsection~\ref{subsec:P1-Fabri} we discuss the Fabri-transformed $JM_1$-pairs that appeared in the literature and
present a new and simpler version of the Fabri-transformation.

\subsection{Secondary Linearization of $P_1$} \label{subsec:P1-pairs}

\begin{proposition}
 \label{prop: P1-nondegenerate}
Consider the following system of linear ODEs
\begin{equation}
 \label{sys:P1-nondegenerate}
JKT_1:\left\{
\begin{aligned}
 \frac{d\Psi}{d\mu}&=\left(\mu\begin{pmatrix}
 0&1&0\\
 0&0&1\\
 0&0&0
 \end{pmatrix}+
 \begin{pmatrix}
 -y & -z & -(4y^2+2t) \\
 0 & y & z \\
 1/4 & 0 & 0
 \end{pmatrix}\right)\Psi,\\
 \frac{d\Psi}{dt}&=2\left(\mu\begin{pmatrix}
 0&0&-1\\
 0&0&0\\
 0&0&0
 \end{pmatrix}+
 \begin{pmatrix}
 0&-y&-z\\
 -1/4&0&-2y\\
 0&-1/4&0
 \end{pmatrix}\right)\Psi,
\end{aligned}\right.
\end{equation}
Where $y=y(t)$ and $z=z(t)$ are analytic functions of $t$. Then the compatibility condition reads,
\begin{equation}
 \label{eq:P1:y-z}
\frac{dy}{dt} = z,\qquad \frac{dz}{dt}=6y^2+t,
\end{equation}
i.e. is equivalent to equation.~\eqref{eq:P1}.
\end{proposition}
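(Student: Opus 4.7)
The plan is to verify the Frobenius compatibility condition \eqref{eq:compatibility} directly on the pair \eqref{sys:P1-nondegenerate}. Writing $\mathcal A(\mu,t)=\mu A_1+A_0$ and $\mathcal U(\mu,t)=\mu U_1+U_0$ with $A_1,U_1$ constant matrices, the identity $\mathcal A_t-\mathcal U_\mu+[\mathcal A,\mathcal U]=0$ expands in powers of $\mu$ into three independent matrix equations:
\begin{equation*}
[A_1,U_1]=0,\qquad [A_1,U_0]+[A_0,U_1]=0,\qquad \frac{dA_0}{dt}-U_1+[A_0,U_0]=0.
\end{equation*}
Each is a $3\times 3$ matrix identity, so the task reduces to checking nine scalar entries in each case.

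The first two identities should hold automatically as consequences of the structural form of the coefficients. For the $\mu^2$ relation, $A_1$ is the nilpotent shift matrix with $1$'s on the superdiagonal while $U_1$ is supported only at the $(1,3)$ entry, so both $A_1U_1$ and $U_1A_1$ vanish outright. For the $\mu^1$ relation, $[A_1,U_0]$ and $[A_0,U_1]$ are each short computations, and I expect their sum to cancel entry by entry; this is the place where one verifies that the specific entries $-z$, $-(4y^2+2t)$, $-2y$ and the constants $1/4$ appearing in $A_0$ and $U_0$ have been chosen consistently.

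The substantive content sits in the $\mu^0$ equation $\dot A_0-U_1+[A_0,U_0]=0$. A direct computation of $[A_0,U_0]$ followed by comparison with $\dot A_0-U_1$ yields nine scalar identities, and I expect only two of them to carry independent information. The $(1,1)$-entry (duplicated with opposite sign by the $(2,2)$-entry) should give $dy/dt=z$, while the $(1,2)$-entry (duplicated by the $(2,3)$-entry) should give $dz/dt=6y^2+t$. The entries below the main diagonal should vanish identically, and entries such as $(1,3)$ — where a combination of the form $-8y\dot y+8yz$ naturally arises — should reduce to a scalar multiple of the first equation. Substituting $z=\dot y$ into the second relation then gives $\ddot y=6y^2+t$, which is exactly $P_1$.

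The only obstacle is the bookkeeping of the nine entries of $[A_0,U_0]$; there is no conceptual difficulty, only the routine care needed to track signs and the overall factor of $2$ in $U_0$.
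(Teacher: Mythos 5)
Your proposal is correct and coincides with the paper's proof, which is simply the direct verification of the Frobenius compatibility condition \eqref{eq:compatibility} with $\lambda\to\mu$; organizing the check by powers of $\mu$ into the three identities $[A_1,U_1]=0$, $[A_1,U_0]+[A_0,U_1]=0$, and $\dot A_0-U_1+[A_0,U_0]=0$ is exactly the intended ``straightforward check,'' and all the entrywise claims you anticipate (including the $(1,3)$-entry reducing to $-8y(\dot y-z)$) do hold.
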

\begin{proof}
The straightforward check of the Frobenious compatibility condition~\eqref{eq:compatibility} with $\lambda\to\mu$.
\end{proof}

In our terminology this is a nondegenerate secondary linearized Fuchs--Garnier pair for $P_1$.

Let us make the generalized Laplace transform of the Fuchs--Garnier pair~\eqref{sys:P1-nondegenerate} with respect to the variable $\mu$,
\begin{equation}
 \label{eq:P1-laplace}
\Psi(\mu,t) = \int_{L}e^{\lambda\mu}\Phi(\lambda,t)\,d\lambda,
\end{equation}
with such contour $L$ chosen to make the certain off-integral terms vanish.\footnote{Here we do not discuss the choice of this contour, so that we
leave this transformation at the formal level. The notation is explained in more detail in Appendix~\ref{app:contour} where the appropriate choice
of contour for the case of $P_{2}$ is given.}  The result reads:
\begin{equation}
 \label{sys:P1-degenerate}
dJKT_1:\left\{
\begin{aligned}
\begin{pmatrix}
0&1&0\\
0&0&1\\
0&0&0
\end{pmatrix}
\frac{d\Phi}{d\lambda}&=\left(-\lambda\begin{pmatrix}
 1&0&0\\
 0&1&0\\
 0&0&1
 \end{pmatrix}+
 \begin{pmatrix}
 -y & -z & -(4y^2+2t) \\
 0 & y & z \\
 1/4 & 0 & 0
 \end{pmatrix}\right)\Phi,\\
 \frac{d\Phi}{dt}&=2\left(\lambda\begin{pmatrix}
 0&-1&0\\
 0&0&0\\
 0&0&0
 \end{pmatrix}+
 \begin{pmatrix}
 0&0&0\\
 -1/4&0&-2y\\
 0&-1/4&0
 \end{pmatrix}\right)\Phi.
\end{aligned}
\right.
\end{equation}
In our terminology this is a degenerate Fuchs--Garnier pair.  To formulate our next result we recall the Fuchs--Garnier pair for $P_{1}$ found by
Jimbo and Miwa~\cite{JM1981II},
\begin{equation}
 \label{sys:P1-JM1}
JM_1:\left\{
\begin{aligned}
\frac{dY}{d\gl}&=\left(\gl^2
\begin{pmatrix}
0&1\\
0&0
\end{pmatrix}+\gl
\begin{pmatrix}
0&y\\
4&0
\end{pmatrix}
+
\begin{pmatrix}
-z&y^2+\tfrac{t}2\\
-4y&z
\end{pmatrix}\right)Y,\\
\frac{dY}{dt}&=\left(\frac{\gl}{2}
\begin{pmatrix}
0&1\\
0&0
\end{pmatrix}+
\begin{pmatrix}
0&y\\
2&0
\end{pmatrix}\right)Y.
\end{aligned}
\right.
\end{equation}

\begin{proposition}
 \label{prop:P1-degenerate}
The Fuchs--Garnier pair~{\rm\eqref{sys:P1-degenerate}} is equivalent to the $JM_1$-pair \eqref{sys:P1-JM1}.
\end{proposition}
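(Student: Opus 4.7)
The plan is to exploit the degeneracy of the $dJKT_1$-pair to reduce it from $3\times 3$ to $2\times 2$ matrices. The coefficient matrix $\begin{pmatrix} 0 & 1 & 0 \\ 0 & 0 & 1 \\ 0 & 0 & 0 \end{pmatrix}$ multiplying $d\Phi/d\lambda$ is nilpotent, so the third row of the spectral equation carries no $\lambda$-derivative and is purely algebraic. Reading it off immediately gives the constraint
\begin{equation*}
\tfrac{1}{4}\phi_1-\lambda\phi_3=0,\qquad\text{i.e.}\qquad \phi_1=4\lambda\phi_3,
\end{equation*}
which is the key to the whole reduction.

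First I would substitute $\phi_1=4\lambda\phi_3$ into the upper two rows of the spectral equation of $dJKT_1$, obtaining a closed $2\times 2$ system for $(\phi_2,\phi_3)^{T}$. A short computation produces a coefficient matrix that is polynomial of degree $2$ in $\lambda$, with precisely the same powers of $\lambda$ and the same dependence on $y,z,t$ as appear in the $JM_1$-pair~\eqref{sys:P1-JM1}, up to an overall factor $-4$ in the off-diagonal entries. A single constant diagonal gauge transformation $Y=\mathrm{diag}(1,-4)(\phi_2,\phi_3)^{T}$ then absorbs these factors and should reproduce the first equation of the $JM_1$-pair exactly. Since both the algebraic elimination $\phi_1=4\lambda\phi_3$ and the constant diagonal gauge are manifestly invertible, this would establish equivalence in both directions.

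Next I would carry out the analogous reduction on the time equation of $dJKT_1$. Two things need checking: that the constraint $\phi_1=4\lambda\phi_3$ is preserved under $t$-evolution, and that the resulting $2\times 2$ time equation for $(\phi_2,\phi_3)^{T}$ matches the time part of $JM_1$ under the \emph{same} gauge $\mathrm{diag}(1,-4)$. The first check should be automatic: differentiating the constraint in $t$ and using rows 2 and 3 of the time equation of $dJKT_1$ ought to reproduce row 1 identically, as it must for compatibility of the overdetermined system. The second check is again a direct matrix computation. The only real obstacle here is bookkeeping -- keeping track of the signs and of the factor $-4$ through the reduction, and confirming that a single constant gauge works simultaneously for the spectral and time equations -- but there is no conceptual difficulty and no differential equation to solve.
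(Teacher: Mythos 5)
Your proposal is correct and follows essentially the same route as the paper: the third row of the $\lambda$-equation gives the algebraic constraint $\Phi_1=4\lambda\Phi_3$, which is used to eliminate $\Phi_1$, and a constant diagonal gauge on $(\Phi_2,\Phi_3)^T$ then yields the $JM_1$-pair (the paper takes $Y=(-\Phi_2/4,\Phi_3)^T$, which is your $\mathrm{diag}(1,-4)$ up to an immaterial overall scalar). The computations you defer all go through, including the automatic preservation of the constraint under the $t$-flow.
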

\begin{proof}
The third row of the $\gl$-equation in system \eqref{sys:P1-degenerate} gives the following relation between the elements of the functions $\Phi$:
\begin{equation*}
\Phi_{1} = 4\gl\Phi_{3}.
\end{equation*}
Using this relation to eliminate $\Phi_{1}$ from the system~\eqref{sys:P1-degenerate} and defining $Y=\left(-\Phi_2/4,\Phi_3\right)^T$ we find
that $Y$ solves $JM_{1}$-pair~\eqref{sys:P1-JM1}.
\end{proof}

We conclude this subsection by mentioning that if the vector solution to system~\eqref{sys:P1-JM1} is written as $Y=(Y_1,Y_2)^{T}$, then the
function $V=V(\lambda,t)$ defined as $Y_2=\sqrt{\lambda-y}\,V$ satisfies the original Garnier pair for $P_{1}$,
\begin{equation*}
G_1:\left\{
\begin{aligned}
\frac{d^2V}{d\lambda^2}&=\left(\frac3{4(\lambda-y)^2}-\frac{y'}{\lambda-y}+
4\lambda^3+2t\lambda+(y')^2-4y^3-2ty\right)V,\\
\frac{dV}{dt}&=\frac1{2(\lambda-y)}\frac{dV}{d\lambda}+\frac1{4(\lambda-y)^2}V.
\end{aligned}
\right.
\end{equation*}

\subsection{The Fabri type transformation for $JM_1$-pair}
 \label{subsec:P1-Fabri}

Since the matrix coefficient of $\gl^{2}$ in the right hand side of the $JM_1$-pair~\eqref{sys:P1-JM1} has zero determinant and zero trace, it is
standard in asymptotic theory to apply the Fabri-type transformation~\cite{I1956} $\gl = \gz^{2}$ to ``cure the defect'' at infinity.  It was
first applied to system \eqref{sys:P1-JM1} by Jimbo and Miwa in the same work \cite{JM1981II} where the $JM_1$-pair was obtained (see p.437) under
the name of the ``shearing'' transformation.  As a result they obtain an equation (see (C.5) in \cite{JM1981II}) with an additional apparent
Fuchsian singularity at the origin.  Although this does not cause any problems for application of the isomonodromy deformation technique in
studying, say, asymptotics of $P_1$,\footnote{The monodromy matrix at the origin is just equal to $-I$, and thus does not depend on t.} this form
of the Fabri-transformed $JM_1$-pair does create problems in application of the Riemann-Hilbert approach.  This is due to the fact that the
corresponding connection matrix (the matrix connecting fundamental solutions at the singular points zero and infinity) for this pair now depends
on the solution of $P_1$.  This problem was first identified by Fokas, Mugan, and Zhou~\cite{FMZ}. To correct this additional problem these
authors introduced one more gauge transformation depending on a spectral parameter, and thus they produced another Fuchs--Garnier pair
($FMZ$-pair) for $P_1$.  The approach of \cite{FMZ} necessitated the introduction of an additional function, $v(t)$, which is related with the
$P_1$ function $y(t)$ via the Riccatti differential equation $iv'(t)-2v^2(t)=y(t)$.  This function appeared in the $FMZ$-pair because in the
Riemann-Hilbert setting there appears an additional parameter in the connection matrix which corresponds to the constant of integration in the
Riccatti equation.

Here we show that there exists a Fabri-type transformation which is free of both problems indicated in the previous paragraph: it does not have an
additional Fuchsian singularity at the origin like the original Fabri-transformed $JM_1$-pair and the $FMZ$-pair. Our Fabri-type transformation
for the fundamental solutions of \eqref{sys:P1-JM1} reads
\begin{equation*}
Y(\lambda,t)=
\begin{pmatrix}
1&-\zeta/2\\
0&1
\end{pmatrix}Z(\zeta,t),
\qquad \lambda=\zeta^2.
\end{equation*}
We find that our Fabri-type transformation maps the $JM_1$-pair to the following one,
\begin{equation}
 \label{sys:P1-Fabri}
JM_1/F:\left\{
\begin{aligned}
\frac{dZ}{d\zeta}=&\left(4\zeta^4
\begin{pmatrix}
1&0\\
0&-1
\end{pmatrix}
+\zeta^3
\begin{pmatrix}
0&4y\\
8&0
\end{pmatrix}+\zeta^2
\begin{pmatrix}
-4y&2z\\
0&4y
\end{pmatrix}
+\right.\\
&\zeta \left.\begin{pmatrix}
-2z&2y^2+t\\
-8y&2z
\end{pmatrix}+
\begin{pmatrix}
0&1/2\\
0&0
\end{pmatrix}
\right)Z,\\
\frac{dZ}{dt}=&\left(\zeta
\begin{pmatrix}
1&0\\
0&-1
\end{pmatrix}+
\begin{pmatrix}
0&y\\
2&0
\end{pmatrix}\right)Z.
\end{aligned}
\right.
\end{equation}

An important role in the study of the Fabri-transformed $JM_1$-pairs is played by the so-called $\sigma_1$-symmetry of the fundamental solutions
related with the reflection, $\zeta\to-\zeta$. The latter symmetry is not easy to observe looking directly at our pair~\eqref{sys:P1-Fabri},
however, a method of its derivation suggests the following identity for the fundamental solutions,
\begin{equation}
 \label{eq:P1-symmetry}
\begin{pmatrix}
1&-\zeta/2\\
0&1
\end{pmatrix}Z(\zeta)=
Y(\lambda)=
\begin{pmatrix}
1&\zeta/2\\
0&1
\end{pmatrix}Z(-\zeta)C,
\end{equation}
for some $C\in SL(2,\mathbb C)$. Thus, the $\sigma_1$-symmetry for our pair reads,
\begin{equation*}
Z(-\zeta)=
\begin{pmatrix}
1&-\zeta\\
0&1
\end{pmatrix}Z(\zeta)\tilde C,
\end{equation*}
where clearly $\tilde C\in SL(2,\mathbb C)$, and thus has nothing to do any more with the Pauli matrix $\sigma_1$!

\section{The $2\times2$ matrix Fuchs--Garnier Pairs for $P_2$}
 \label{sec:results}
The rest of the paper is devoted to $P_2$. Henceforth the notation $y$ and $z$ means solutions of $P_2$ \eqref{eq:P2} and equation $P_{34}$ (see
equation \eqref{eq:P34}, below), respectively. Subsections~\ref{subsec:JM2}-\ref{subsec:P2-Fabri} are devoted to the review of the known results
for the $2\times2$ matrix Fuchs-Garnier pairs for $P_2$. In Subsection~\ref{subsec:main} we formulate the main result of the paper.
\subsection{The Jimbo--Miwa Pair}
 \label{subsec:JM2}
Jimbo and Miwa \cite{JM1981II} give the following matrix version of the Fuchs-Garnier pair for $P_2$:
\begin{equation}
 \label{eq:FG-JM}
JM_{2}: \left\{
\begin{aligned}
\frac{dY}{d\gl} &= \left( \gl^{2}
\begin{pmatrix}
1 & 0 \\
0 & -1
\end{pmatrix} + \gl
\begin{pmatrix}
0 & u \\
-2u^{-1}z & 0
\end{pmatrix} +
\begin{pmatrix}
z + t/2 & -u y \\
-2u^{-1}(yz + \gt) & -z - t/2
\end{pmatrix} \right) Y, \\
\frac{dY}{dt} &= \left( \frac{\gl}{2}
\begin{pmatrix}
1 & 0 \\
0 & -1
\end{pmatrix} + \frac{1}{2}
\begin{pmatrix}
0 & u \\
-2u^{-1}z & 0
\end{pmatrix} \right) Y,
\end{aligned}
\right.
\end{equation}
The compatibility condition~\eqref{eq:compatibility} for $JM_2$-pair~\eqref{eq:FG-JM} reads:
\begin{equation}
 \label{eq:IDS-JM}
\frac{du}{dt} = -yu, \quad \frac{dy}{dt} = y^{2} + z + \frac{t}{2}, \quad \frac{dz}{dt} = -2yz - \gt.
\end{equation}

Excluding the functions $u$ and $z$ from \eqref{eq:IDS-JM} we find that the function $y$ satisfies $P_2$ \eqref{eq:P2} with $\ga = \frac{1}{2} -
\gt$.

Excluding the functions $u$ and $y$ from \eqref{eq:IDS-JM} we find that the function $z$ satisfies the following second order equation,
\begin{equation}
 \label{eq:P34}
P_{34}:\qquad\frac{d^{2}z}{dt^{2}} = \frac{1}{2z} \Big(\frac{dz}{dt}\Big)^{2} - 2z^2 - tz - \frac{\gt^{2}}{2z},
\end{equation}
which, up to a scaling change of $z$ and $t$, coincides with the 34-th equation in the classical Painlev\'{e}--Gambier list, see p.~340 in
\cite{I1956}.

For the convenience of the reader we present in Appendix~\ref{app:P2} a relation of the $JM_2$-pair with the original scalar Garnier pair
($G_2$-pair) and a direct mapping of another $2\times2$ matrix version of $G_2$-pair by Conte and Musette~\cite{CM2000} ($CM2$-pair) to the
$JM_2$-pair.

\subsection{The Flaschka--Newell Pair}
 \label{subsec:FN}
Flaschka and Newell \cite{FN1980} found the following Fuchs--Garnier pair for $P_2$:
\begin{equation}
 \label{eq:FG-FN}
FN: \left\{
\begin{aligned}
\frac{dZ}{d\gz} &= \Big( -4i\gz^{2} \gs_{3} + 4y \gz \gs_{1} - 2y'\gs_{2} - i(2y^2 + t)\gs_{3} -
\frac{\ga}{\gz} \gs_{1} \Big) Z, \\
\frac{dZ}{dt} &= \Big( -i\gz\gs_{3} + y\gs_{1} \Big) Z,
\end{aligned}
\right.
\end{equation}
where $\gz$ is the spectral parameter, $t$ is the dynamical variable, prime denotes differentiation with respect to $t$, and $\gs_{1}$, $\gs_{2}$,
$\gs_{3}$ is the standard notation for the Pauli matrices:
\begin{equation*}
\gs_{1} =
\begin{pmatrix}
0 & 1 \\
1 & 0
\end{pmatrix}, \quad
\gs_{2} =
\begin{pmatrix}
0 & -i \\
i & 0
\end{pmatrix}, \quad
\gs_{3} =
\begin{pmatrix}
1 & 0 \\
0 & -1
\end{pmatrix}.
\end{equation*}
The compatibility condition~\eqref{eq:compatibility} for $FN$-pair \eqref{eq:FG-FN} implies that $y$ is a solution of $P_2$ \eqref{eq:P2}.

We note that the equation in $\gl$ in the $JM_{2}$-pair has one singularity only, an irregular singularity at infinity, while the equation in
$\gz$ in the $FN$-pair has two singularities, a regular singularity at $\gz=0$ and an irregular singularity at infinity. It follows that there
does not exist an algebraic gauge transformation for generic values of the parameter $\alpha$ between these systems.

Let us also mention one more difference between the $JM_2$- and $FN$- pairs, namely, the additional $\sigma_1$-symmetry for solutions of the
$FN$-pair,
\begin{equation}
 \label{eq:P2-sigma1-symmetry}
Z(-\zeta)=i\sigma_1Z(\zeta)C,\qquad\mathrm{where}\qquad C\in SL(2,\mathbb C).
\end{equation}

\subsection{The Harnad--Tracy--Widom Pair}
 \label{subsec:HTW}
There also exists a third $2\times2$ matrix Fuchs--Garnier pair for $P_{2}$, which was first given implicitly by Harnad, Tracy, and Widom in
\cite{HTW1993} in connection with Random Matrix Theory. Explicitly this pair was presented by Kapaev and Hubert \cite{KH1999} and in connection
with the symmetric form of $P_2$ by Noumi~\cite{N2000}. The $HTW$-pair may be written as
\begin{equation}
 \label{eq:FG-HTW}
HTW: \left\{
\begin{aligned}
\frac{dW}{d\mu} &= \left( \mu
\begin{pmatrix}
0 & 1 \\
0 & 0
\end{pmatrix} +
\begin{pmatrix}
-y & -(z + 2y^{2} + t) \\
\frac{1}{2} & y
\end{pmatrix} + \frac{1}{2\mu}
\begin{pmatrix}
\gt & 0 \\
z & -\gt
\end{pmatrix} \right) W, \\
\frac{dW}{dt} &= -\left( \mu
\begin{pmatrix}
0 & 1 \\
0 & 0
\end{pmatrix} +
\begin{pmatrix}
-y & 0 \\
\frac{1}{2} & y
\end{pmatrix} \right) W,
\end{aligned}
\right.
\end{equation}
where $\gt$ is a complex parameter.  Compatibility of system \eqref{eq:FG-HTW} implies that the functions $y$ and $z$ satisfy the following system
of nonlinear ODEs:
\begin{equation}
 \label{eq:IDS-HTW}
\frac{dy}{dt} = y^{2} + z + \frac{t}{2}, \quad \frac{dz}{dt} = -2z y - \gt.
\end{equation}
Eliminating $z$ from this system we find that the function $y$ satisfies the second Painlev\'{e} equation \eqref{eq:P2} with parameter $\ga = 1/2-
\gt$.

For completeness we recall the symmetric form of $P_2$~\cite{N2000},
\begin{equation*}
f_0'=-2qf_0+\alpha_0,\qquad f_1'=2qf_1+\alpha_1,\qquad q'=\frac12(f_1-f_0),
\end{equation*}
where $\alpha_0=1-\gt$, $\alpha_1=\gt$, and the functions $f_0=f_0(t)$, $f_1=f_1(t)$ and $q=q(t)$ in our notation read:
\begin{equation*}
f_0=z+2y^2+t,\qquad f_1=-z,\qquad q=-y,
\end{equation*}
so that the $HTW$-pair gets a natural parametrization in terms of the ``symmetric variables''.

\subsection{The Fabri transformation}
 \label{subsec:P2-Fabri}
$HTW$-pair~\eqref{eq:FG-HTW} and  $FN$-pair~\eqref{eq:FG-FN} are related via a special Fabri-type transformation \cite{KH1999}:
\begin{equation}
 \label{eq:P2-Z-HTW}
Z(\gz,t)=G(\gz)W(\mu,t),\qquad G(\gz) =\frac1{\sqrt2}
\begin{pmatrix}
1 & -1 \\
1 & 1
\end{pmatrix}
\left( \frac{i}{2\gz} \right)^{\gs_{3}/2},\qquad \mu = -2\gz^{2}
\end{equation}
The function $z$ in \eqref{eq:FG-HTW} is given by the equation $z = y' - y^{2} - \frac{t}{2}$. The $\sigma_1$-symmetry for $Z$
\eqref{eq:P2-sigma1-symmetry} follows from the Fabri transformation because of the identity,\footnote{Compare with the analogous derivation for
$P_1$ \eqref{eq:P1-symmetry}.}
\begin{equation*}
G(-\zeta)G^{-1}(\zeta)=i\sigma_1.
\end{equation*}

\subsection{Main Result}
 \label{subsec:main}
Now we are ready to formulate the main result of this paper.
\begin{theorem}
 \label{th:main}
The fundamental solution $Y(\gl,t)$ of $JM_2$-pair \eqref{eq:FG-JM} is related with the fundamental solution $W(\mu,t)$ of $HTW$-pair
\eqref{eq:FG-HTW} via the following integral transform
\begin{equation}
 \label{eq:P2-integral-transform}
\begin{pmatrix}
-u/\mu & 0 \\
0 & 2
\end{pmatrix}
\mu^{\gt/2} W(\mu,t) = \int_{L} e^{-\gl^{3}/3 + \gl(\mu - t/2)} Y(\gl,t) d\gl,
\end{equation}
where the contour $L$ is specified in Appendix~\ref{app:contour}. The inverse transformation is given by the inverse Laplace transform.
\end{theorem}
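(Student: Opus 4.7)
The plan is to realize the integral transform \eqref{eq:P2-integral-transform} as the reduction of the generalized Laplace transform between the two $3\times 3$ secondary linearized pairs constructed in Section~\ref{sec:JKT2} — that is, to traverse the upper path $JM_2 \leftarrow JKT_2 \xrightarrow{\text{Laplace}} dJKT_2^1 \to HTW$ in the diagram of Figure~\ref{fig:P2}. First I would exhibit an explicit gauge/elimination that reduces the nondegenerate $3\times 3$ pair $JKT_2$ to the $2\times 2$ pair $JM_2$: using one row of the $\lambda$-equation as an algebraic relation among the components of $\Psi$, two components can be kept to form the $JM_2$-vector $Y$, with the coefficients $u,z,\theta$ arising naturally from the coefficients of $JKT_2$. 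Then I would apply the generalized Laplace transform
\[
\Psi(\mu,t) = \int_{L} e^{-\lambda^{3}/3 + \lambda(\mu-t/2)}\,\Phi(\lambda,t)\,d\lambda
\]
(whose exponential kernel is dictated by the rank-$3$ irregular singularity of the $JM_2$-pair at $\lambda=\infty$, with $\mu$ playing the role of the Laplace-dual variable to $\lambda$) to convert $JKT_2$ into the degenerate pair $dJKT_2^1$, with the $t$-dependent shift $-t/2$ absorbing the $t$-dependent part of the leading $\lambda$-coefficient of the $JM_2$ $t$-equation.

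Next I would show that $dJKT_2^1$ reduces to $HTW$: since the leading matrix $B_1$ in front of $d\Phi/d\mu$ is nilpotent, two rows yield algebraic constraints that express one component in terms of the others, and the remaining $2\times 2$ subsystem matches $HTW$ after the explicit diagonal gauge $\mathrm{diag}(-u/\mu,2)\,\mu^{\theta/2}$. Composing the four arrows gives exactly formula~\eqref{eq:P2-integral-transform}, with the prefactor on the left-hand side being precisely this diagonal gauge. Commutativity of Figure~\ref{fig:P2} would then be checked by repeating the construction along the lower path through $dJKT_2^2$ and $dJKT_2^3$: the kernel $e^{-\lambda^{3}/3+\lambda(\mu-t/2)}$ and the matrix prefactor coincide, which both verifies commutativity and provides an independent derivation of the main formula.

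Invertibility is essentially automatic: the two algebraic reductions $JKT_2\to JM_2$ and $dJKT_2^1\to HTW$ are invertible (one simply reintroduces the eliminated component via the algebraic relation), and the generalized Laplace transform inverts by integration against $e^{\lambda^{3}/3-\lambda(\mu-t/2)}$ along the dual contour. The main obstacle is the choice of the contour $L$: it must (i) be such that the boundary terms arising from integration by parts in $\lambda$ (which convert $\lambda$-derivatives in $JKT_2$ into multiplication by $\mu$ in $HTW$) vanish, (ii) lie in the three Stokes sectors at $\lambda=\infty$ where $\mathrm{Re}(-\lambda^{3}/3)\to-\infty$ so the integral converges, and (iii) connect canonical Stokes solutions of $JM_2$ to canonical Stokes solutions of $HTW$ in a way that produces a nonzero fundamental matrix rather than a degenerate one. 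This Stokes-sector analysis — essentially an Airy-type steepest-descent argument in the parameter $\mu-t/2$ — is what is carried out in Appendix~\ref{app:contour}, and completes the proof.
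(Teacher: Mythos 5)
Your proposal follows essentially the same route as the paper: Section~\ref{sec:JKT2} reduces the nondegenerate pair \eqref{eq:R3-FG-P2} to the $JM_2$-pair by the normalization $\gk_2=-1$, Laplace-transforms it to the degenerate pair \eqref{eq:R3-FG-P2-degenerate} (labelled $dJKT_2^3$, not $dJKT_2^1$, in Figure~\ref{fig:P2}), reduces that to the $HTW$-pair, and composes the four maps; Section~\ref{sec:dJKT2} re-derives the same formula along the lower path to confirm commutativity, and Appendix~\ref{app:contour} supplies exactly the Stokes-sector contour analysis you outline. The one slip is the placement of the cubic kernel: the transform between the two $3\times3$ pairs is the plain Laplace transform \eqref{eq:Laplace} with kernel $e^{\gl\mu}$ (applying $e^{-\gl^3/3+\gl(\mu-t/2)}$ directly at the $3\times3$ level would, after integration by parts, turn multiplication by $\mu$ into $-\partial_\gl+\gl^2+t/2$ and destroy the secondary linearization), and the factor $e^{-\gl^3/3-\gl t/2}$ in \eqref{eq:P2-integral-transform} instead enters through the gauge $\chi=e^{-(\gl^{3}/3+\gl t/2)}Y$ that matches the reduced $2\times2$ system \eqref{eq:FG-P2-phi} to the $JM_2$ normalization.
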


The proof of this Theorem is given in Sections~\ref{sec:JKT2} and \ref{sec:dJKT2}.

\begin{corollary}
 \label{cor:main}
The relation between the $JM_2$- and $FN$- pairs can be obtained as a composition of equations~\eqref{eq:P2-Z-HTW} and
\eqref{eq:P2-integral-transform}.
\end{corollary}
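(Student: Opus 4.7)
My proof strategy is direct composition. By Theorem~\ref{th:main}, the integral transform \eqref{eq:P2-integral-transform} is an invertible map between the fundamental solution $Y(\gl,t)$ of the $JM_2$-pair and the fundamental solution $W(\mu,t)$ of the $HTW$-pair. The Fabri-type formula \eqref{eq:P2-Z-HTW} is, in turn, an invertible algebraic transformation $Z(\zeta,t) = G(\zeta)\,W(-2\zeta^{2},t)$ between the $HTW$-pair and the $FN$-pair. Chaining these two mappings yields an invertible transformation from $Y$ to $Z$, which is precisely the content of the corollary.

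Concretely, the first step is to solve \eqref{eq:P2-integral-transform} for $W(\mu,t)$ by inverting the diagonal prefactor $\mathrm{diag}(-u/\mu,\,2)\,\mu^{\gt/2}$; this represents $W$ as an integral of $Y$ against the kernel $e^{-\gl^{3}/3+\gl(\mu-t/2)}$ over the contour $L$ of Appendix~\ref{app:contour}. The second step is to substitute $\mu=-2\zeta^{2}$ and multiply on the left by $G(\zeta)$, producing the explicit formula
\begin{equation*}
Z(\zeta,t) \;=\; G(\zeta)\,(-2\zeta^{2})^{-\gt/2}\!
\begin{pmatrix} 2\zeta^{2}/u & 0 \\ 0 & 1/2 \end{pmatrix}\!
\int_{L} e^{-\gl^{3}/3-\gl(2\zeta^{2}+t/2)}\,Y(\gl,t)\,d\gl,
\end{equation*}
which is the desired direct integral map between the $JM_{2}$- and $FN$- pairs.

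The only point requiring attention is that the contour $L$, fixed in Appendix~\ref{app:contour} so that the off-integral terms vanish for generic $\mu$, must remain admissible after the specialization $\mu=-2\zeta^{2}$. Admissibility of $L$ is governed by the cubic decay of $e^{-\gl^{3}/3}$ at infinity, which dominates the linear exponential factor in $\gl$; hence $L$ may be retained for $\zeta$ in the appropriate sector, up to a continuous deformation tracking the change of $\arg\mu$ along the parabola $\mu=-2\zeta^{2}$. A complementary check is that the branch of $\mu^{\gt/2}$ must be selected compatibly with the $\gs_{1}$-symmetry \eqref{eq:P2-sigma1-symmetry} of $Z$, which is automatic from the identity $G(-\zeta)G^{-1}(\zeta)=i\gs_{1}$ recorded after \eqref{eq:P2-Z-HTW}. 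In summary, once Theorem~\ref{th:main} is in hand the corollary reduces to algebraic substitution plus a routine contour-admissibility argument; no genuine obstacle arises.
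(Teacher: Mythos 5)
Your proposal is correct and matches the paper's intent exactly: the paper gives no separate proof of this corollary, treating it as the immediate composition of the invertible Fabri transformation \eqref{eq:P2-Z-HTW} with the integral transform \eqref{eq:P2-integral-transform} of Theorem~\ref{th:main}, which is precisely what you carry out (and your explicit composed formula, including the prefactor $\mathrm{diag}(2\zeta^{2}/u,\,1/2)$ and the exponent $-\gl^{3}/3-\gl(2\zeta^{2}+t/2)$, is consistent with the substitution $\mu=-2\zeta^{2}$). Your added remarks on contour admissibility under the change of $\arg\mu$ and on the branch of $\mu^{\gt/2}$ are sensible refinements but not required by the paper's argument.
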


\section{A $3\times3$ Fuchs--Garnier Pair for $P_{2}$}
 \label{sec:JKT2}

\begin{proposition}
 \label{proposition:R3-FG-P2}
The compatibility condition for the linear system
\begin{equation}
 \label{eq:R3-FG-P2}
JKT_{2}: \left\{
\begin{aligned}
\begin{pmatrix}
-1 & \gl + y & 0 \\
0 & -\frac{1}{2} & \gl \\
0 & 0 & -1
\end{pmatrix}
\frac{d\Phi}{d\gl} &=
\begin{pmatrix}
z + 2y^2 + t & -1 - \gk_{1} & 0 \\
-y & -\frac{1}{2}z & -1 - \gk_{2} \\
1 & 0 & 0
\end{pmatrix}
\Phi, \\
\frac{d\Phi}{dt} &=
\begin{pmatrix}
-\gl & \frac{1}{2}z & 1 + \gk_{2} \\
-1 & y & 0 \\
0 & -\frac{1}{2} & -y
\end{pmatrix}
\Phi,
\end{aligned}
\right.
\end{equation}
where $\gk_{j}, j=1,2$ are parameters, is governed by the following system of nonlinear equations:
\begin{equation}
 \label{eq:IDS-JM-prop}
\frac{dy}{dt} = y^{2} + z + \frac{t}{2}, \quad \frac{dz}{dt} = -2yz - (\gk_{1} - \gk_{2}).
\end{equation}
Eliminating $z$ from system \eqref{eq:IDS-JM-prop} we find that the function $y$ satisfies the second Painlev\'{e} equation \eqref{eq:P2} with
parameter $\ga = \frac{1}{2} - (\gk_{1} - \gk_{2})$.
\end{proposition}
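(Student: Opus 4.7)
The plan is to follow the pattern of Proposition~\ref{prop: P1-nondegenerate}: a direct verification of the Frobenius compatibility condition~\eqref{eq:compatibility} for the pair~\eqref{eq:R3-FG-P2}, followed by elimination of $z$ to recover~\eqref{eq:P2}. The first observation is that the matrix multiplying $d\Phi/d\gl$, call it $M(\gl)$, is upper triangular with constant diagonal $\operatorname{diag}(-1,-\tfrac12,-1)$, so $\det M\equiv -\tfrac12$ and the pair is nondegenerate. Consequently $\mathcal{A}(\gl,t):=M^{-1}(\gl)N(\gl,t)$ is polynomial (in fact quadratic) in $\gl$ and the pair is in the standard form~\eqref{eq:IMD}, with $\mathcal{U}$ linear in $\gl$.

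Next I would substitute into $\mathcal{A}_t-\mathcal{U}_\gl+[\mathcal{A},\mathcal{U}]=0$ and compare coefficients of powers of $\gl$. Since $\mathcal{A}$ is quadratic and $\mathcal{U}$ is linear, the identity is at most cubic, giving four matrix equations (orders $\gl^3,\gl^2,\gl^1,\gl^0$). In practice it is more efficient to avoid writing down $M^{-1}$ explicitly by premultiplying the compatibility identity by $M$ and working with
\begin{equation*}
M_t\,M^{-1}N+M\,\mathcal{U}_\gl+M\,\mathcal{U}\,M^{-1}N=N_t+N\,\mathcal{U},
\end{equation*}
in which $M^{-1}N$ is trivial to compute entrywise thanks to the upper triangular structure of $M$. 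The two highest orders in $\gl$ yield algebraic identities that hold by construction, while the coefficients of $\gl^0$ and $\gl^1$ collapse, after cancellations, to exactly~\eqref{eq:IDS-JM-prop} (together with redundant copies of the same equations).

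To complete the proposition, I would then differentiate $y'=y^2+z+t/2$, substitute $z'=-2yz-(\gk_1-\gk_2)$ from the second equation, and eliminate $z$ via $z=y'-y^2-t/2$. A short computation yields $y''=2y^3+ty+\tfrac12-(\gk_1-\gk_2)$, which is~\eqref{eq:P2} with $\ga=\tfrac12-(\gk_1-\gk_2)$. The only real obstacle is bookkeeping rather than mathematics: there are nine entries per matrix and four orders in $\gl$ to track in the commutator $[\mathcal{A},\mathcal{U}]$. Organizing the check order by order in $\gl$ (starting with $\gl^3$, which fixes the leading algebraic structure, and working down) keeps this manageable, and no conceptual idea is required beyond what was already used in Proposition~\ref{prop: P1-nondegenerate}.
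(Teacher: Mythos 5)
Your proposal is correct and takes essentially the same approach as the paper, which proves Proposition~\ref{proposition:R3-FG-P2} by exactly this direct check of the Frobenius compatibility condition $\partial_t\partial_\gl\Phi=\partial_\gl\partial_t\Phi$ (stated in one line, without the bookkeeping you describe). Your supporting observations all check out: the left factor is upper triangular with constant diagonal so $\det M\equiv-\tfrac12$ and $\mathcal{A}=M^{-1}N$ is quadratic in $\gl$, the $\gl^3$ and $\gl^2$ coefficients of the compatibility identity vanish identically, and eliminating $z$ indeed yields $P_2$ with $\ga=\tfrac12-(\gk_1-\gk_2)$.
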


\begin{proof}
The result follows from the Frobenius compatibility condition $\pa{}_{t}\pa{}_{\gl} \Phi = \pa{}_{\gl}\pa{}_{t} \Phi$.
\end{proof}

\subsection{Reduction to the $JM_{2}$-pair}

Since the compatibility condition \eqref{eq:IDS-JM-prop} depends on the parameters $\gk_{1}$ and $\gk_{2}$ only through their difference, there is
an additional degree of freedom in system \eqref{eq:R3-FG-P2}.  We will show that, by a special choice of the parameters $\gk_{1}$, $\gk_{2}$,
system \eqref{eq:R3-FG-P2} can be reduced to the $JM_{2}$-pair.

\begin{proposition}
 \label{proposition:FG-P2-JM}
If $\gk_{j} = -1$ for either $j=1$ or $j=2$ in system \eqref{eq:R3-FG-P2}, then system \eqref{eq:R3-FG-P2} can be reduced to $JM_{2}$-pair
\eqref{eq:FG-JM} plus a quadrature.
\end{proposition}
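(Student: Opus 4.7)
I will treat the case $\gk_2 = -1$ in detail; the case $\gk_1 = -1$ is handled by a dual argument that exploits the proportionality of the second and third columns of the $\gl$-coefficient matrix when $\gk_1 = -1$. First, I multiply the $\gl$-equation of~\eqref{eq:R3-FG-P2} on the left by the inverse of its upper-triangular left-hand coefficient matrix, putting the system into normal form $d\Phi/d\gl = \mathcal{A}(\gl,t)\Phi$. Setting $\gk_2 = -1$ one sees that the third column of $\mathcal{A}$ vanishes identically; simultaneously $\mathcal{U}_{13} = 1 + \gk_2 = 0$ in the $t$-equation, while $\mathcal{U}_{23} = 0$ already. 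Hence $\Phi_3$ decouples completely from the evolution of $(\Phi_1, \Phi_2)$, and the latter satisfies a closed $2\times 2$ linear system. The remaining component is recovered by the quadrature $\pa{\Phi_3}/\pa{\gl} = -\Phi_1$, which is consistent, via~\eqref{eq:IDS-JM-prop}, with the companion $t$-equation $\pa{\Phi_3}/\pa{t} + y\Phi_3 = -\Phi_2/2$.

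I then identify the closed $(\Phi_1,\Phi_2)$-subsystem with the $JM_2$-pair~\eqref{eq:FG-JM} by applying a gauge transformation of the schematic form
\begin{equation*}
\binom{\Phi_2}{\Phi_1} = e^{\chi(\gl,t)}\,\operatorname{diag}(1,-u/2)\,Y(\gl,t),
\end{equation*}
consisting of a scalar exponential gauge, a permutation of the two components, and a $t$-dependent diagonal rescaling involving the Jimbo--Miwa auxiliary function $u(t)$. Matching the (nonzero) traces of the subsystem's coefficient matrices to the traceless matrices of $JM_2$ forces $\pa{\chi}/\pa{\gl} = -(\gl^2 + t/2)$ from the $\gl$-equation and, together with the $t$-dependent part of the diagonal rescaling, $\pa{\chi}/\pa{t} = (y-\gl)/2$ from the $t$-equation; these two constraints are simultaneously integrable precisely because $u'(t) = -y\,u(t)$ from~\eqref{eq:IDS-JM}. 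Matching the off-diagonals then pins down the parameter identification $\gt = 1 + \gk_1$, under which the induced $P_2$ parameter $\ga = \tfrac12 - (\gk_1 - \gk_2) = \tfrac12 - \gt$ agrees with that of~\eqref{eq:FG-JM}.

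The only nontrivial technical step, and the one I expect to require the most care, is verifying the simultaneous consistency of these constraints on the $\gl$- and $t$-derivatives of $\chi$ and the off-diagonal matching under the diagonal rescaling; both reduce to direct uses of~\eqref{eq:IDS-JM-prop} and the auxiliary relation $u' = -y u$, with no further identities required. This completes the $\gk_2 = -1$ case. For $\gk_1 = -1$, one instead observes that $\mathcal{A}$ acquires a one-dimensional kernel spanned by $(0,-2(1+\gk_2)/z,1)^T$; a $\gl$-independent change of dependent variables aligning this kernel with the third coordinate direction produces a transformed $3\times 3$ system whose $\gl$-coefficient matrix has vanishing third column, after which the reduction-plus-gauge scheme outlined above again yields the $JM_2$-pair plus a single quadrature recovering the third component.
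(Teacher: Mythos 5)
Your proposal follows essentially the same route as the paper's proof: pass to the normal form $d\Phi/d\gl=\mathcal{A}\Phi$, observe that $\gk_2=-1$ kills the third column of $\mathcal{A}$ (and the $(1,3)$, $(2,3)$ entries of the $t$-equation) so that $(\Phi_1,\Phi_2)$ closes into a $2\times2$ system with $\Phi_3$ recovered by the quadrature $d\Phi_3/d\gl=-\Phi_1$, and then gauge that subsystem to $JM_2$ with the identification $\gt=1+\gk_1$; the paper likewise treats only $\gk_2=-1$ explicitly and leaves $\gk_1=-1$ to the reader. The paper's explicit gauge is $\phi=\bigl(\begin{smallmatrix}0&1/2\\-u^{-1}&0\end{smallmatrix}\bigr)e^{-(\gl^3/3+\gl t/2)}Y$, which pins down the constants your schematic factorization leaves open (with your split, where $\operatorname{diag}(1,-u/2)$ carries the $u$-dependence, the trace matching actually gives $\pa{\chi}/\pa{t}=y-\gl/2$ rather than $(y-\gl)/2$), but this is a bookkeeping adjustment within your framework, not a gap.
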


\begin{proof}
To prove this statement we first note that the coefficient matrix on the right hand side of the $\gl$ equation in \eqref{eq:R3-FG-P2} has
determinant $(1 + \gk_{1})(1 + \gk_{2})$.  Setting $\gk_{j} = -1$ for either $j=1$ or $j=2$ it follows that, upon diagonalizing the coefficient
matrix, system \eqref{eq:R3-FG-P2} can be reduced to a $2\times2$ matrix system plus a quadrature.  To simplify the following calculation we note
that system \eqref{eq:R3-FG-P2} can be written in the following form
\begin{equation*}
\frac{d\Phi}{d\gl} = -
\begin{pmatrix}
2\gl^2 + z + t & -\gl z - yz - (1 + \gk_{1}) & -2(1 + \gk_{2})(\gl + y) \\
2(\gl - y) & -z & -2(1 + \gk_{2}) \\
1 & 0 & 0
\end{pmatrix}
\Phi.
\end{equation*}
We present a proof only for a simpler case $\gk_{2} = -1$, which does not require the diagonalizing procedure.
It is the case we refer below in Subsection~\ref{subsec:4.3}. The case $\gk_1=-1$ is not employed in our work and
left to the interested reader. So, we set $\gk_{2} = -1$ and note that the third component can be solved by quadrature
once the remaining two components are determined.  The first two components of $\Phi$ satisfy the following linear
$2\times2$ matrix system:
\begin{equation}
 \label{eq:FG-P2-phi}
\begin{aligned}
\frac{d\phi}{d\gl} &= -\left( \gl^{2}
\begin{pmatrix}
2 & 0 \\
0 & 0
\end{pmatrix}
+ \gl
\begin{pmatrix}
0 & -z \\
2 & 0
\end{pmatrix}
+
\begin{pmatrix}
z + t & -yz - (1 + \gk_{1}) \\
-2y & -z
\end{pmatrix}
\right)
\phi, \\
\frac{d\phi}{dt} &= -\frac{1}{2} \left( \gl
\begin{pmatrix}
2 & 0 \\
0 & 0
\end{pmatrix}
+
\begin{pmatrix}
0 & -z \\
2 & -2y
\end{pmatrix}
\right) \phi, \quad \phi =
\begin{pmatrix}
\Phi_{1} \\
\Phi_{2}
\end{pmatrix}
.
\end{aligned}
\end{equation}
We now make a gauge transformation in system \eqref{eq:FG-P2-phi},
\begin{equation}
\phi =
\begin{pmatrix}
0 & \frac{1}{2} \\
-u^{-1} & 0
\end{pmatrix}
\chi
\end{equation}
where the function $u(t)$ is defined by $u' = -yu$.  The resulting system is given by
\begin{equation}
 \label{eq:FG-P2-chi}
\begin{aligned}
\frac{d\chi}{d\gl} &= \left( \gl^{2}
\begin{pmatrix}
0 & 0 \\
0 & -2
\end{pmatrix}
+ \gl
\begin{pmatrix}
0 & u \\
-2u^{-1}z & 0
\end{pmatrix}
+
\begin{pmatrix}
z & -uy \\
-2u^{-1}(yz + 1 + \gk_{1}) & -z - t
\end{pmatrix}
\right)
\chi, \\
\frac{d\chi}{dt} &= \frac{1}{2} \left( \gl
\begin{pmatrix}
0 & 0 \\
0 & -2
\end{pmatrix}
+
\begin{pmatrix}
0 & u \\
-2u^{-1}z & 0
\end{pmatrix}
\right) \chi.
\end{aligned}
\end{equation}
This system is gauge equivalent to the $JM_{2}$-pair given in \eqref{eq:FG-JM}. To see this, we define the parameter $\gt = (1 + \gk_{1})$ and
make the change of variables
\begin{equation*}
\chi(\gl,t) = e^{-(\gl^3/3 + \gl t/2)}Y(\gl,t).
\end{equation*}
\end{proof}

\subsection{Reduction to the $HTW$-pair}

\begin{proposition}
 \label{proposition:FG-P2-FN}
System \eqref{eq:R3-FG-P2} can be mapped to $HTW$-pair \eqref{eq:FG-HTW} by application of the generalized Laplace transform:
\begin{equation}
 \label{eq:Laplace}
\Psi(\mu,t) = \int_{L} e^{\gl\mu} \Phi(\gl,t) d\gl,
\end{equation}
where the contour $L$ is specified in Appendix~\ref{app:contour}.
\end{proposition}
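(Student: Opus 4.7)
The plan is to follow the strategy of Proposition~\ref{prop:P1-degenerate}. First I would split the left-hand side of the $\gl$-equation of $JKT_2$ as $(A_0+\gl A_1)\,d\Phi/d\gl$, where $A_1$ is the nilpotent $3\times 3$ Jordan matrix with ones on the superdiagonal, and denote the right-hand side by $B\Phi$. The crucial structural observation is that the third row of $A_1$ is zero, so after the Laplace transform the transformed $\gl$-equation will automatically become degenerate in the sense of this paper, exactly as happened in the $P_1$ story.

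Next I would apply $\int_L e^{\gl\mu}(\,\cdot\,)\,d\gl$ term-by-term. Two integrations by parts, valid provided that the boundary contribution $[e^{\gl\mu}\Phi]_L$ vanishes (the existence of such an $L$ being exactly what Appendix~\ref{app:contour} establishes), give the operational identities
$$
\int_L e^{\gl\mu}\frac{d\Phi}{d\gl}\,d\gl=-\mu\Psi,\qquad
\int_L e^{\gl\mu}\gl\,\frac{d\Phi}{d\gl}\,d\gl=-\mu\frac{\partial\Psi}{\partial\mu}-\Psi.
$$
These convert the $\gl$-equation into $\mu A_1\,\partial\Psi/\partial\mu=-(\mu A_0+A_1+B)\Psi$. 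Since the third row of $\mu A_1$ vanishes, the third row of this $3\times3$ system is purely algebraic and, read off directly, yields the constraint $\Psi_1=\mu\Psi_3$.

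Substituting $\Psi_1=\mu\Psi_3$ into the first two rows and dividing through by $\mu$ produces a closed $2\times2$ system for $(\Psi_2,\Psi_3)^T$ whose coefficient matrix agrees entry-by-entry with the $\mu$-coefficient of $HTW$-pair~\eqref{eq:FG-HTW} under the parameter identification $\gt=\gk_1-\gk_2$. Any residual shift $(\gk_1+\gk_2)/2$ is absorbed by the scalar gauge $\Psi\mapsto\mu^{-(\gk_1+\gk_2)/2}\Psi$, which is legitimate because the compatibility system~\eqref{eq:IDS-JM-prop} depends only on $\gk_1-\gk_2$. The $t$-equation is handled analogously: only the entry $-\gl\Phi_1$ depends on $\gl$, and under the Laplace transform it becomes $-\partial\Psi_1/\partial\mu$; once the substitution $\Psi_1=\mu\Psi_3$ is made, the second and third rows collapse exactly onto the $t$-equation of $HTW$, while the first row reduces to a consequence of the already-matched $\mu$-equation.

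The only step that is not a mechanical verification is the justification that the boundary terms vanish along some nontrivial contour $L$, which is deferred to Appendix~\ref{app:contour}. Granting that, the proof becomes a direct calculation in the same spirit as Propositions~\ref{prop:P1-degenerate} and~\ref{proposition:FG-P2-JM}.
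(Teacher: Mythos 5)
Your proposal follows essentially the same route as the paper: apply the generalized Laplace transform with the operational rules $\Phi'\mapsto-\mu\Psi$, $\gl\Phi'\mapsto-\mu\Psi_\mu-\Psi$ to obtain the degenerate $3\times3$ system (the paper's $dJKT_2^3$), read off the algebraic constraint $\Psi_1=\mu\Psi_3$ from the third row, reduce to a $2\times2$ system for $(\Psi_2,\Psi_3)^T$, and remove the trace of the residue at $\mu=0$ by a scalar gauge $\mu^{-(\gk_1+\gk_2)/2}$ (the paper's $\mu^{-1+\gt/2}$ after setting $\gk_2=-1$, $\gk_1=\gt-1$), with the contour issue deferred to the appendix exactly as the paper does. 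The only cosmetic deviation is that you transform the $-\gl\Phi_1$ entry of the $t$-equation directly to $-\partial_\mu\Psi_1$, whereas the paper first rewrites it as $\gl\,d\Phi_3/d\gl$ via the third row of the $\gl$-equation; both yield the same reduced system.
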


\begin{proof}
We start the proof by writing system \eqref{eq:R3-FG-P2} in the following form:
\begin{equation}
 \label{eq:R3-FG-P2-linear}
\begin{aligned}
\begin{pmatrix}
-1 & \gl + y & 0 \\
0 & -\frac{1}{2} & \gl \\
0 & 0 & -1
\end{pmatrix}
\frac{d\Phi}{d\gl} &=
\begin{pmatrix}
z + 2y^2 + t & -1 - \gk_{1} & 0 \\
-y & -\frac{1}{2}z & -1 - \gk_{2} \\
1 & 0 & 0
\end{pmatrix}
\Phi, \\
\frac{d\Phi}{dt} &=
\begin{pmatrix}
0 & 0 & 1 \\
0 & 0 & 0 \\
0 & 0 & 0
\end{pmatrix}
\gl\frac{d\Phi}{d\gl} +
\begin{pmatrix}
0 & \frac{1}{2}z & 1 + \gk_{2} \\
-1 & y & 0 \\
0 & -\frac{1}{2} & -y
\end{pmatrix}
\Phi.
\end{aligned}
\end{equation}
Substituting \eqref{eq:Laplace} into \eqref{eq:R3-FG-P2-linear}, and assuming that the contour $L$ can be chosen to eliminate any remainder terms
that arise from integration-by-parts, we find
\begin{equation}
 \label{eq:R3-FG-P2-degenerate}
dJKT_{2}^{3}: \left\{
\begin{aligned}
\begin{pmatrix}
0 & 1 & 0 \\
0 & 0 & 1 \\
0 & 0 & 0
\end{pmatrix}
\mu \frac{d\Psi}{d\mu} &=
\begin{pmatrix}
\mu - (z + 2y^2 + t) & -\mu y + \gk_{1} & 0 \\
y & \frac{1}{2}\mu +\frac{1}{2}z & \gk_{2} \\
-1 & 0 & \mu
\end{pmatrix}
\Psi, \\
\frac{d\Psi}{dt} &=
\begin{pmatrix}
0 & 0 & -1 \\
0 & 0 & 0 \\
0 & 0 & 0
\end{pmatrix}
\mu \frac{d\Psi}{d\mu} +
\begin{pmatrix}
0 & \frac{1}{2}z & \gk_{2} \\
-1 & y & 0 \\
0 & -\frac{1}{2} & -y
\end{pmatrix}
\Psi.
\end{aligned}
\right.
\end{equation}
In our terminology, system \eqref{eq:R3-FG-P2-degenerate} is a degenerate secondary linearized Fuchs--Garnier pair for $P_{2}$.  The third row of
the $\mu$ equation in \eqref{eq:R3-FG-P2-degenerate} gives the following relation between the elements of the function $\Psi$:
\begin{equation*}
\Psi_{1} = \mu \Psi_{3}.
\end{equation*}
Using this relation to eliminate $\Psi_{1}$ from the above equations, we find that the remaining components of $\Psi$ satisfy the following linear
$2\times2$ matrix system
\begin{equation}
 \label{eq:FG-P2-psi}
\begin{aligned}
\frac{d\psi}{d\mu} &= \left( \mu
\begin{pmatrix}
0 & 1 \\
0 & 0
\end{pmatrix}
+
\begin{pmatrix}
-y & -(z + 2y^2 + t) \\
\frac{1}{2} & y
\end{pmatrix}
+ \frac{1}{\mu}
\begin{pmatrix}
\gk_{1} & 0 \\
\frac{1}{2}z & \gk_{2}
\end{pmatrix}
\right)
\psi, \\
\frac{d\psi}{dt} &=  -\left( \mu
\begin{pmatrix}
0 & 1 \\
0 & 0
\end{pmatrix}
+
\begin{pmatrix}
-y & 0 \\
\frac{1}{2} & y
\end{pmatrix}
\right) \psi, \quad \psi =
\begin{pmatrix}
\Psi_{2} \\
\Psi_{3}
\end{pmatrix},
\end{aligned}
\end{equation}
which is gauge equivalent to the $HTW$-pair given in \eqref{eq:FG-HTW}.
\end{proof}

\subsection{Integral transform between the $JM_{2}$- and the $HTW$- pairs}
\label{subsec:4.3}

In this section we construct \emph{explicitly} the integral transform which maps the $2\times2$ system of Jimbo--Miwa into the $2\times2$ system
of Harnad--Tracy--Widom.

\begin{theorem}
 \label{theorem:FG-P2-JM2FN}
The function $W(\mu,t)$, which solves the $HTW$-pair given in \eqref{eq:FG-HTW}, is related to the function $Y(\gl,t)$, which solves the
$JM_{2}$-pair given in \eqref{eq:FG-JM}, via the integral transform \eqref{eq:P2-integral-transform}.

\end{theorem}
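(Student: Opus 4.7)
The plan is to obtain the integral transform as the composition of three maps already constructed in this section, namely the gauge reduction of the $JKT_2$-pair to the $JM_2$-pair (Proposition~\ref{proposition:FG-P2-JM}), the generalized Laplace transform taking $JKT_2$ to $dJKT_2^3$ (Proposition~\ref{proposition:FG-P2-FN}), and the reduction of $dJKT_2^3$ to the $HTW$-pair. The key bookkeeping is to fix the free parameters $\gk_1,\gk_2$ consistently: taking $\gk_2=-1$ enables the reduction to $JM_2$ (with $\gt=1+\gk_1$), and then the residual $1/\mu$ term in the $dJKT_2^3$-reduced $2\times2$ system becomes $\tfrac1\mu\mathrm{diag}(\gt-1,-1)+\tfrac{1}{2\mu}\begin{pmatrix}0&0\\z&0\end{pmatrix}$, which is gauge-equivalent to the $HTW$-residue $\tfrac1{2\mu}\begin{pmatrix}\gt&0\\z&-\gt\end{pmatrix}$ via a scalar gauge $\psi=\mu^{\gt/2-1}W$.

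First I would lift $Y(\gl,t)$ to a solution $\Phi(\gl,t)$ of $JKT_2$ with $\gk_2=-1$: by Proposition~\ref{proposition:FG-P2-JM} we have $(\Phi_1,\Phi_2)^T=\begin{pmatrix}0&1/2\\-u^{-1}&0\end{pmatrix}e^{-(\gl^3/3+\gl t/2)}Y$, and $\Phi_3$ is fixed by quadrature (which we do not need explicitly). Next I would apply the Laplace transform $\Psi(\mu,t)=\int_L e^{\gl\mu}\Phi(\gl,t)\,d\gl$ componentwise, so
\begin{align*}
\Psi_1(\mu,t)&=\tfrac12\int_L e^{-\gl^3/3+\gl(\mu-t/2)}Y_2(\gl,t)\,d\gl,\\
\Psi_2(\mu,t)&=-u^{-1}\int_L e^{-\gl^3/3+\gl(\mu-t/2)}Y_1(\gl,t)\,d\gl.
\end{align*}
The identity $\Psi_1=\mu\Psi_3$ coming from the third row of the $\mu$-equation of $dJKT_2^3$ then gives $\Psi_3=\Psi_1/\mu$. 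Finally, I would apply the scalar gauge $\psi=\mu^{\gt/2-1}W$ to identify $W=\mu^{1-\gt/2}(\Psi_2,\Psi_3)^T$ with the $HTW$-fundamental solution.

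Substituting these formulas and multiplying by the matrix on the left of \eqref{eq:P2-integral-transform}, the first row produces
\begin{equation*}
-\tfrac{u}{\mu}\mu^{\gt/2}W_1=-\tfrac{u}{\mu}\mu^{\gt/2}\mu^{1-\gt/2}(-u^{-1})\!\int_L\!e^{-\gl^3/3+\gl(\mu-t/2)}Y_1\,d\gl=\!\int_L\!e^{-\gl^3/3+\gl(\mu-t/2)}Y_1\,d\gl,
\end{equation*}
and the second row produces $2\mu^{\gt/2}W_2=\mu^{\gt/2}\cdot\mu^{-\gt/2}\int_L e^{-\gl^3/3+\gl(\mu-t/2)}Y_2\,d\gl$, exactly \eqref{eq:P2-integral-transform}. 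Invertibility then follows from the invertibility of each step (the gauge transformations are algebraic and the Laplace transform is inverted by the inverse Laplace transform on the appropriate contour).

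The main obstacle is not the algebra, which amounts to tracking the two scalar gauge factors $e^{-(\gl^3/3+\gl t/2)}$ and $\mu^{\gt/2-1}$, but rather the analytic justification of the Laplace step: one must choose the contour $L$ so that (i) the integrals converge uniformly in $(\mu,t)$ in a suitable domain, and (ii) all boundary terms arising from integration by parts in the verification of Proposition~\ref{proposition:FG-P2-FN} vanish, so that the formal computation is rigorous. This choice of contour (for the double exponential weight $e^{-\gl^3/3+\gl(\mu-t/2)}$, which decays in three Stokes sectors at infinity) is precisely the task deferred to Appendix~\ref{app:contour}, and invoking that result completes the proof.
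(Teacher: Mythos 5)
Your proposal is correct and follows essentially the same route as the paper's proof in Section~\ref{subsec:4.3}: lift $Y$ to the $JKT_2$-pair with $\gk_2=-1$, $\gk_1=\gt-1$ via the gauge of Proposition~\ref{proposition:FG-P2-JM}, apply the Laplace transform of Proposition~\ref{proposition:FG-P2-FN}, use $\Psi_1=\mu\Psi_3$ and the scalar gauge $\psi=\mu^{-1+\gt/2}W$ to land on the $HTW$-pair, and defer the contour choice to Appendix~\ref{app:contour}. The bookkeeping of the exponential and power-of-$\mu$ gauge factors matches the paper's computation exactly.
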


\begin{proof}
{}From Proposition~\ref{proposition:FG-P2-JM} we note that the function $Y(\gl,t)$ is related to the function $\phi(\gl,t)$ in system
\eqref{eq:FG-P2-phi} via the gauge transformation
\begin{equation*}
\phi(\gl,t) =
\begin{pmatrix}
0 & \frac{1}{2} \\
-u^{-1} & 0
\end{pmatrix}
e^{-(\gl^{3}/3 + \gl t/2)} Y(\gl,t).
\end{equation*}
Similarly, if we set $\gk_{2} = -1$ and $\gk_{1} = \gt - 1$, then from Proposition~\ref{proposition:FG-P2-FN} we note that the function $W(\mu,t)$
is related to the function $\psi(\mu,t)$ in system \eqref{eq:FG-P2-psi} via the following change of variables
\begin{equation*}
\psi(\mu,t) = \mu^{-1+\gt/2} W(\mu,t).
\end{equation*}
Finally, $\psi(\mu,t)$ is related to the function $\phi(\gl,t)$ in \eqref{eq:FG-P2-phi} via the integral transform given in \eqref{eq:Laplace} and
a simple gauge transformation:
\begin{equation*}
\begin{pmatrix}
0 & \mu \\
1 & 0
\end{pmatrix}
\psi(\mu,t) = \int_{L} e^{\gl\mu} \phi(\gl,t) d\gl.
\end{equation*}
We then find
\begin{equation*}
\begin{pmatrix}
0 & \mu \\
1 & 0
\end{pmatrix}
\mu^{-1+ \gt/2} W(\mu,t) = \int_{L} e^{\gl\mu}
\begin{pmatrix}
0 & \frac{1}{2} \\
-u^{-1} & 0
\end{pmatrix}
e^{-(\gl^{3}/3 + \gl t/2)} Y(\gl,t) d\gl,
\end{equation*}
which simplifies to give \eqref{eq:P2-integral-transform}.
\end{proof}

\section{Alternate secondary linearization of the $2\times2$ Fuchs--Garnier pairs for $P_{2}$}
 \label{sec:dJKT2}

In Section~\ref{sec:JKT2} we introduced a novel $3\times3$ matrix Fuchs--Garnier pair for the second Painlev\'{e} equation $P_{2}$, see system
\eqref{eq:R3-FG-P2}.  One of the principal advantages of this $3\times3$ matrix system was that it is linear with respect to the spectral
parameter $\gl$, i.e.~it is a secondary linearization of $P_{2}$.  In this Section we present an alternate secondary linearization of the
$2\times2$ matrix Fuchs--Garnier pairs for $P_{2}$.

\begin{proposition}
 \label{proposition:altR3-FG-JM}
The degenerate linear system
\begin{equation}
 \label{eq:altR3-FG-JM}
dJKT_{2}^{1}: \left\{
\begin{aligned}
\begin{pmatrix}
1 & 0 & 0 \\
0 & 1 & 0 \\
0 & 0 & 0
\end{pmatrix}
\frac{d\Phi}{d\gl} &=
\begin{pmatrix}
-(z + t) & \gl z + (yz + \gt) & 2\gl \\
2y & z & 2 \\
\gl & 0 & 1
\end{pmatrix}
\Phi, \\
\frac{d\Phi}{dt} &= \frac{1}{2}
\begin{pmatrix}
0 & z & 2 \\
-2 & 2y & 0 \\
0 & -\gl z & -2\gl
\end{pmatrix}
\Phi,
\end{aligned}
\right.
\end{equation}
is reducible to the $2\times2$ matrix Fuchs--Garnier pair for $P_{2}$ of Jimbo--Miwa.

\end{proposition}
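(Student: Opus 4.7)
The plan is to exploit the degeneracy of the matrix on the left of the $\gl$-equation: the third row of that matrix is zero, so the third row of $dJKT_2^1$'s $\gl$-equation is not a differential equation but an algebraic constraint. Reading it off gives $\gl\Phi_1+\Phi_3=0$, i.e.\ $\Phi_3=-\gl\Phi_1$. First I would verify that this constraint is preserved by the $t$-equation; using the third row of the $t$-equation together with the first row, a short computation shows $\partial_t(\gl\Phi_1+\Phi_3)\equiv 0$ modulo the constraint itself, so the reduction is consistent.

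Next I would substitute $\Phi_3=-\gl\Phi_1$ into the remaining two rows to obtain a $2\times 2$ system for $\phi=(\Phi_1,\Phi_2)^T$:
\begin{equation*}
\frac{d\phi}{d\gl}=\begin{pmatrix} -(2\gl^2+z+t) & \gl z+yz+\gt \\ -2(\gl-y) & z \end{pmatrix}\phi,\qquad
\frac{d\phi}{dt}=\begin{pmatrix} -\gl & z/2 \\ -1 & y \end{pmatrix}\phi.
\end{equation*}
This is analogous to the step in the proof of Proposition~\ref{proposition:FG-P2-JM} where a third component is solved by quadrature; here the third component is algebraically determined rather than obtained by integration.

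The remaining task is to identify this $2\times 2$ system with the $JM_2$-pair up to gauge. I would look for a transformation of the form $\phi=e^{g(\gl,t)}H(t)Y$, where the scalar factor $e^g$ absorbs the traces (recall that $JM_2$ is traceless) and $H$ is a constant-in-$\gl$ swap-and-rescale that matches the off-diagonal structure. Computing traces gives $g_\gl=-\gl^2-t/2$ and $g_t=-\gl/2+y$, compatibly thanks to the mixed derivative check; using $u'=-yu$ we recognize $\int y\,dt=-\ln u+\mathrm{const}$, so up to a constant $e^g=u^{-1}e^{-\gl^3/3-\gl t/2}$. The natural choice $H=\bigl(\begin{smallmatrix}0 & -u/2\\ 1 & 0\end{smallmatrix}\bigr)$ (swap plus the factor $-u/2$ that appears in $JM_2$) then yields, by direct conjugation of the $\gl$- and $t$-coefficient matrices of $JM_2$, exactly the reduced system above. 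Hence
\begin{equation*}
\phi(\gl,t)=e^{-\gl^3/3-\gl t/2}\begin{pmatrix} 0 & -1/2 \\ 1/u & 0 \end{pmatrix}Y(\gl,t),
\end{equation*}
with $Y$ solving \eqref{eq:FG-JM} and the identification $\ga=1/2-\gt$ coming from the $P_2$ parameterization.

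The only mildly subtle step is the simultaneous matching of $\gl$- and $t$-equations under a single gauge: the trace conditions give $g_\gl$ and $g_t$ separately, and the key consistency check $g_{\gl t}=g_{t\gl}$ works precisely because $u$ evolves by $u'=-yu$, which is one of the equations of the isomonodromy system \eqref{eq:IDS-JM}. Once $g$ and $H$ are pinned down, the remaining verification that the off-diagonal entries of $JM_2$ conjugate correctly into those of the reduced system is a routine computation, parallel in spirit to the end of the proof of Proposition~\ref{proposition:FG-P2-JM}.
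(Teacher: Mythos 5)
Your proof is correct and follows essentially the same route as the paper: read the algebraic constraint off the degenerate third row, eliminate $\Phi_{3}$ to land on the $2\times2$ system \eqref{eq:FG-P2-phi}, and then gauge-transform to the $JM_{2}$-pair (the paper simply cites Proposition~\ref{proposition:FG-P2-JM} for this last step, whereas you rederive the gauge via the trace argument, arriving at the same transformation up to an irrelevant overall sign). Note that your constraint $\Phi_{3}=-\gl\Phi_{1}$ is the correct one --- the relation $\Phi_{3}=\gl\Phi_{1}$ printed in the paper's proof is a sign misprint, as only the minus sign reproduces \eqref{eq:FG-P2-phi}.
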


\begin{proof}
{}From the third row in the $\gl$ equation in \eqref{eq:altR3-FG-JM} we have the following relation between elements of the function $\Phi$
\begin{equation*}
\Phi_{3} = \gl\Phi_{1}.
\end{equation*}
Using this relation to eliminate $\Phi_{3}$ from system \eqref{eq:altR3-FG-JM} we find that the remaining two components satisfy the linear
$2\times2$ matrix system given in \eqref{eq:FG-P2-phi} with $\gt = (1 + \gk_{1})$. It was shown in Proposition~\ref{proposition:FG-P2-JM} that
this system is related to $JM_2$-pair via an elementary gauge transformation.
\end{proof}

System \eqref{eq:altR3-FG-JM} can be mapped to $HTW$-pair. This fact is proved in the following Proposition.
\begin{proposition}
 \label{proposition:altFG-P2-FN}
System \eqref{eq:altR3-FG-JM} can be mapped to $HTW$-pair \eqref{eq:FG-HTW} by application of the generalized Laplace transform defined in
\eqref{eq:Laplace}.
\end{proposition}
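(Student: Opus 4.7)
My plan closely follows the template of Proposition~\ref{proposition:FG-P2-FN}. First I would rewrite the $t$-equation of \eqref{eq:altR3-FG-JM} by splitting its coefficient matrix into its $\gl$-independent part and its $\gl$-linear part, so that the Laplace transform can be applied uniformly to both the spectral and evolution equations. Then, using $\int_L e^{\gl\mu}\,\frac{d\Phi}{d\gl}\,d\gl = -\mu\Psi$ (valid once the contour $L$ is chosen to annihilate the off-integral terms, as in Appendix~\ref{app:contour}) together with $\int_L e^{\gl\mu}\,\gl\Phi\,d\gl = \frac{d\Psi}{d\mu}$, I would apply the generalized Laplace transform \eqref{eq:Laplace} to system \eqref{eq:altR3-FG-JM}. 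The result is a new degenerate $3\times 3$ Fuchs--Garnier pair for $P_2$, namely the pair labelled $dJKT_2^2$ in the commutative diagram of Figure~\ref{fig:P2}.

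The crucial structural observation is that the second row of the coefficient matrix on the right-hand side of the spectral equation in \eqref{eq:altR3-FG-JM} is $\gl$-independent, while the second row of the matrix multiplying $\frac{d\Phi}{d\gl}$ contributes only a $-\mu\Psi_2$ term under the transform. Together these turn the second component of the transformed spectral equation into the purely algebraic relation
\begin{equation*}
\Psi_3 \;=\; -y\Psi_1 - \frac{\mu + z}{2}\,\Psi_2.
\end{equation*}
This plays exactly the role that the relation $\Psi_1 = \mu\Psi_3$ plays in the proof of Proposition~\ref{proposition:FG-P2-FN}. Substituting this expression into the remaining two components of the transformed spectral equation and into the transformed $t$-equation produces a closed $2\times 2$ linear system for $(\Psi_1,\Psi_2)^T$, with a regular singularity at $\mu=0$ generated by the $\frac{d\Psi}{d\mu}$ terms that survive the elimination and an irregular singularity of Poincar\'e rank one at $\mu=\infty$ --- exactly the singularity structure of the $HTW$-pair.

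The remaining step is to identify an explicit gauge transformation $(\Psi_1,\Psi_2)^T = G(\mu,t)\,W(\mu,t)$ that brings the reduced $2\times 2$ system into precisely the form \eqref{eq:FG-HTW}. Guided by Theorem~\ref{theorem:FG-P2-JM2FN}, I expect $G$ to consist of a component swap and rescaling combined with a factor of $\mu^{\gt/2}$ (with parameter identification $\gt = 1+\gk_1$, analogous to Proposition~\ref{proposition:FG-P2-JM}), and I would fix it by matching the leading behaviour of the spectral equation at $\mu=\infty$ together with the residue at $\mu=0$. The main obstacle will be verifying that \emph{one and the same} gauge matrix $G(\mu,t)$ simultaneously intertwines both the spectral and the evolution equations of the reduced system with those of the $HTW$-pair; by commutativity of the diagram in Figure~\ref{fig:P2} this is bound to succeed, but it requires a careful bookkeeping of elementary-but-tedious matrix algebra. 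The question of the admissible contour $L$ is, as throughout the paper, deferred to Appendix~\ref{app:contour}.
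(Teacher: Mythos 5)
Your opening moves coincide with the paper's: applying the generalized Laplace transform \eqref{eq:Laplace} to \eqref{eq:altR3-FG-JM} (with $\gl\Phi\mapsto d\Psi/d\mu$ and $d\Phi/d\gl\mapsto-\mu\Psi$) yields exactly the degenerate pair $dJKT_2^2$ of equation \eqref{eq:altR3-FG-P2-FN}, and the algebraic relation $\Psi_3=-y\Psi_1-\tfrac{\mu+z}{2}\Psi_2$ that you read off from the second row of the transformed spectral equation is correct. The gap is in the final step. If you eliminate $\Psi_3$ directly with this relation, the closed $2\times2$ system for $(\Psi_1,\Psi_2)^T$ is
\begin{equation*}
\frac{d}{d\mu}\begin{pmatrix}\Psi_1\\ \Psi_2\end{pmatrix}
=\left(\mu\begin{pmatrix}0&\tfrac12\\ 0&0\end{pmatrix}
+\begin{pmatrix}y&\tfrac{z}{2}\\ 1&-y\end{pmatrix}
+\frac1{\mu}\begin{pmatrix}0&0\\ -(z+2y^2+t)&\gt-1\end{pmatrix}\right)
\begin{pmatrix}\Psi_1\\ \Psi_2\end{pmatrix},
\end{equation*}
whose residue matrix at $\mu=0$ has eigenvalues $\{0,\gt-1\}$, i.e.\ Frobenius exponents differing by $\gt-1$. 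The $HTW$-pair \eqref{eq:FG-HTW} has exponents $\{\gt/2,-\gt/2\}$ at $\mu=0$, differing by $\gt$. Any gauge of the form you anticipate --- a $\mu$-independent matrix composed with the scalar factor $\mu^{\gt/2}$ --- preserves the difference of the exponents at the Fuchsian point, so for generic $\gt$ no such gauge exists and the ``elementary-but-tedious matrix algebra'' you defer cannot close.

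The missing ingredient is an integer shift of one exponent, i.e.\ a Schlesinger-type gauge that is genuinely rational in $\mu$. The paper builds this in by first applying the $\mu$-independent $3\times3$ gauge \eqref{eq:altR3-FG-FN-gauge1} to the transformed system; after that gauge the second row yields the \emph{$\mu$-dependent} relation $\chi_2=-\chi_1-\tfrac1{\mu}\left(z\chi_1+2yz\chi_3\right)$ of equation \eqref{eq:Psi-relation}, and eliminating $\chi_2$ with it performs the rank reduction and the Schlesinger shift in one stroke. The resulting $2\times2$ system \eqref{eq:FG-P2-psi-tilde} has exponents $\{0,\gt\}$ at $\mu=0$ and is then carried to \eqref{eq:FG-HTW} by the constant-in-$t$-only gauge \eqref{eq:altR3-FG-FN-gauge2} containing the factor $\mu^{\gt/2}$, exactly as you expected for that last stage. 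So your route is salvageable, but only after you enlarge the ansatz for $G(\mu,t)$ to include a nontrivial $1/\mu$ part (or, equivalently, insert the paper's preliminary $3\times3$ gauge before eliminating a component).
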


\begin{proof}
System \eqref{eq:altR3-FG-JM} is linear with respect to the spectral variable $\gl$ and so we can immediately apply the Laplace transform in
\eqref{eq:Laplace}.  The resulting degenerate $3\times3$ matrix system is given by
\begin{equation}
 \label{eq:altR3-FG-P2-FN}
dJKT_{2}^{2}: \left\{
\begin{aligned}
\begin{pmatrix}
0 & z & 2 \\
0 & 0 & 0 \\
1 & 0 & 0
\end{pmatrix}
\frac{d\Psi}{d\mu} &=
\begin{pmatrix}
-\mu + (z + t) & -(yz + \gt) & 0 \\
-2y & -\mu - z & -2 \\
0 & 0 & -1
\end{pmatrix}
\Psi, \\
\frac{d\Psi}{dt} &= \frac{1}{2}
\begin{pmatrix}
0 & z & 2 \\
-2 & 2y & 0 \\
\mu - (z + t) & yz + \gt & 0
\end{pmatrix}
\Psi.
\end{aligned}
\right.
\end{equation}
In order to simplify the following calculation, we make a gauge transformation in system \eqref{eq:altR3-FG-P2-FN} of the form
\begin{equation}
 \label{eq:altR3-FG-FN-gauge1}
\Psi =
\begin{pmatrix}
0 & 0 & 1 \\
z^{-1} & z^{-1} & 0 \\
0 & -\frac{1}{2} & 0
\end{pmatrix}
\chi.
\end{equation}
The resulting system is given by
\begin{subequations}
 \label{eq:altR3-FG-P2-FN2}
\begin{align}
 \label{eq:altR3-FG-P2-FN2-mu}
\begin{pmatrix}
1 & 0 & 0 \\
0 & 0 & 0 \\
0 & 0 & 1
\end{pmatrix}
\frac{d\chi}{d\mu} &=
\begin{pmatrix}
-(y + \gt z^{-1}) & -(y + \gt z^{-1}) & -\mu + (z + t) \\
-\mu - z & -\mu & -2yz \\
0 & \frac{1}{2} & 0
\end{pmatrix}
\chi, \\
 \label{eq:altR3-FG-P2-FN2-t}
\frac{d\chi}{dt} &=
\begin{pmatrix}
0 & 0 & \mu - 2z - t \\
-y - \gt z^{-1} & -y - \gt z^{-1} & -\mu + z + t \\
\frac{1}{2} & 0 & 0
\end{pmatrix}
\chi.
\end{align}
\end{subequations}
The second row in equation \eqref{eq:altR3-FG-P2-FN2-mu} implies a relation between the elements of the function $\chi$:
\begin{equation}
 \label{eq:Psi-relation}
\chi_{2} = -\chi_{1} - \frac{1}{\mu} (z\chi_{1} + 2yz\chi_{3}).
\end{equation}
Using this relation to eliminate $\chi_{2}$ from system \eqref{eq:altR3-FG-P2-FN2} we find that the remaining two components satisfy the following
linear $2\times2$ matrix system
\begin{equation}
 \label{eq:FG-P2-psi-tilde}
\begin{aligned}
\frac{d\psi}{d\mu} &= \left( \mu
\begin{pmatrix}
0 & -1 \\
0 & 0
\end{pmatrix}
+
\begin{pmatrix}
0 & z + t \\
-\frac{1}{2} & 0
\end{pmatrix}
+ \frac{1}{\mu}
\begin{pmatrix}
yz + \gt & 2yz(y + \gt z^{-1}) \\
-\frac{1}{2}z & -yz
\end{pmatrix}
\right)
\psi, \\
\frac{d\psi}{dt} &=  \left( \mu
\begin{pmatrix}
0 & 1 \\
0 & 0
\end{pmatrix}
+
\begin{pmatrix}
0 & -2z - t \\
\frac{1}{2} & 0
\end{pmatrix}
\right) \psi, \quad \psi =
\begin{pmatrix}
\chi_{1} \\
\chi_{2}
\end{pmatrix}.
\end{aligned}
\end{equation}
Making the gauge transformation
\begin{equation}
 \label{eq:altR3-FG-FN-gauge2}
\psi =
\begin{pmatrix}
-1 & -2y \\
0 & 1
\end{pmatrix}
\mu^{\gt/2} W,
\end{equation}
in system \eqref{eq:FG-P2-psi-tilde} we get system \eqref{eq:FG-HTW}.
\end{proof}

\subsection{Alternate proof of Theorem~\ref{theorem:FG-P2-JM2FN}}

\begin{proof}
{}From Proposition~\ref{proposition:altR3-FG-JM} we note that the degenerate system \eqref{eq:altR3-FG-JM} is reducible to $JM_{2}$-pair
\eqref{eq:FG-JM}, while from Proposition~\ref{proposition:altFG-P2-FN} the degenerate system \eqref{eq:altR3-FG-P2-FN2} is reducible to $HTW$-pair
\eqref{eq:FG-HTW}.  The function $\Phi(\gl,t)$ in \eqref{eq:altR3-FG-JM} is related to the function $\chi(\mu,t)$ in \eqref{eq:altR3-FG-P2-FN2}
via the following integral transform
\begin{equation*}
\begin{pmatrix}
0 & 0 & 1 \\
z^{-1} & z^{-1} & 0 \\
0 & -\frac{1}{2} & 0
\end{pmatrix}
\chi(\mu,t) = \int_{L} e^{\gl\mu} \Phi(\gl,t) d\gl.
\end{equation*}
The first two components of this expression give
\begin{equation*}
\begin{pmatrix}
\chi_{3}(\mu,t) \\
z^{-1} \big(\chi_{1}(\mu,t) + \chi_{2}(\mu,t)\big)
\end{pmatrix}
= \int_{L} e^{\gl\mu}
\begin{pmatrix}
\Phi_{1}(\gl,t) \\
\Phi_{2}(\gl,t)
\end{pmatrix}
d\gl.
\end{equation*}
Using relation \eqref{eq:Psi-relation} to eliminate $\chi_{2}$ from this expression we find
\begin{equation*}
\begin{pmatrix}
0 & 1 \\
-\frac{1}{\mu} & -\frac{2y}{\mu}
\end{pmatrix}
\psi(\mu,t) = \int_{L} e^{\gl\mu} \phi(\gl,t) d\gl,
\end{equation*}
where $\psi = (\chi_{1},\chi_{3})^{T}$ and $\phi = (\Phi_{1},\Phi_{2})^{T}$.  From Proposition~\ref{proposition:altR3-FG-JM}, the function
$Y(\gl,t)$ is related to the function $\phi(\gl,t)$ via the gauge transformation
\begin{equation*}
\phi(\gl,t) =
\begin{pmatrix}
0 & \frac{1}{2} \\
-u^{-1} & 0
\end{pmatrix}
e^{-(\gl^{3}/3 + \gl t/2)} Y(\gl,t).
\end{equation*}
Similarly, from Proposition~\ref{proposition:altFG-P2-FN}, the function $W(\mu,t)$ is related to the function $\psi(\gl,t)$ in system
\eqref{eq:FG-P2-psi-tilde} via the gauge transform given in \eqref{eq:altR3-FG-FN-gauge2}.  Combining these two expressions we find
\begin{equation*}
\begin{pmatrix}
0 & 1 \\
-\frac{1}{\mu} & -\frac{2y}{\mu}
\end{pmatrix}
\begin{pmatrix}
-1 & -2y \\
0 & 1
\end{pmatrix}
\mu^{\gt/2} W(\mu,t) = \int_{L} e^{\gl\mu}
\begin{pmatrix}
0 & \frac{1}{2} \\
-u^{-1} & 0
\end{pmatrix}
e^{-(\gl^{3}/3 + \gl t/2)} Y(\gl,t) d\gl,
\end{equation*}
which simplifies to give \eqref{eq:P2-integral-transform}.
\end{proof}

\appendix

\section{On the matrix versions of the Garnier pair for $P_2$}
 \label{app:P2}
There are two matrix versions of the original scalar $G_2$-pair: the $JM_2$-pair and the Fuchs-Garnier pair obtained by Conte and
Musette~\cite{CM2000}, the $CM_2$-pair. Since both pairs were obtained by some simple transformations from the $G_2$-pair they are equivalent and
should be related via a simple gauge transformation. However, in \cite{JM1981II} there are no explicit details given of the relation between the
$JM_2$- and $G_2$- pairs and so, to complete our diagram in Figure~\ref{fig:P2}, we give this relation here. In the work \cite{CM2000} one finds
details of the derivation of the matrix $CM_2$-pair from the scalar $G_2$-pair, however it looks very much different from the matrix $JM_2$-pair.
To establish their equivalence we present a direct transformation between the $JM_2$- and $CM_2$- pairs avoiding the original ``intermediate''
object $G_2$-pair.

We begin with the relation between the $JM_2$- and $G_2$- pairs. Consider any column of the fundamental matrix solution to system
\eqref{eq:FG-JM}, $Y_k =(Y_{k1},Y_{k2})^{T}$, $k=1,2$, then the function $V = V(\gl,t)$ defined as $Y_{k1} = \sqrt{u(\gl - y)} \, V$, for any $k$,
satisfies the original Garnier pair for $P_{2}$,
\begin{equation}
 \label{eq:G2-pair}
G2: \left\{
\begin{aligned}
&\frac{d^2V}{d\gl^2}=\Big(\frac3{4(\gl-y)^2}-\frac{y'}{\gl-y}+(y')^2+\gl^4-y^4+t(\gl^2-y^2)+2\ga(\gl-y)\Big)V,\\
&\frac{dV}{dt}=\frac1{2(\gl-y)}\frac{dV}{d\gl}+\frac1{4(\gl-y)^2} V,
\end{aligned}
\right.
\end{equation}
where $\ga=\frac12-\gt$.

Thus we have shown that the $JM_{2}$-pair gives a matrix representation of the scalar $G_{2}$-pair. Of course, different matrix representations
are also possible and in particular we discussed above the $CM_2$-pair. Clearly, all these matrix representations are equivalent, however
sometimes a direct explicit mapping between them is not immediately obvious. The $CM_2$-pair reads:
\begin{equation}
 \label{eq:CM2-pair}
CM_{2}: \left\{
\begin{aligned}
\frac{dM}{d\gl}&=\left(\gl^3
\begin{pmatrix}
0&1\\
0&0
\end{pmatrix}+
\gl^2
\begin{pmatrix}
0&y\\
0&0
\end{pmatrix}+
\gl
\begin{pmatrix}
0&y^2+t\\
1&0
\end{pmatrix}+
\begin{pmatrix}
-y'&y^3+ty+2\ga\\
-y&y'
\end{pmatrix}
\right)M,\\
\frac{dM}{dt}&=\left(\frac{\gl^2}2
\begin{pmatrix}
0&1\\
0&0
\end{pmatrix}+\gl
\begin{pmatrix}
0&y\\
0&0
\end{pmatrix}+
\begin{pmatrix}
0&(3y^2+t)/2\\
1/2&0
\end{pmatrix}
\right)M,
\end{aligned}
\right.
\end{equation}
where $y=y(t)$ is any solution of $P_2$ \eqref{eq:P2} and $y'=dy/dt$.

To map this system directly into the $JM_{2}$-pair given in \eqref{eq:FG-JM} we introduce the parameter $\gt$ and functions $z=z(t)$, $u=u(t)$ as
follows:
\begin{equation*}
\gt=\frac12-\ga,\qquad z=y'-y^2-\frac{t}2,\qquad \frac{du}{dt}=-yu.
\end{equation*}
Then a relation between the fundamental solutions of \eqref{eq:FG-JM}, $Y=Y(\gl,t)$, and \eqref{eq:CM2-pair}, $M=M(\gl,t)$, is given by the
following gauge transformation depending on the spectral parameter:
\begin{equation}
 \label{eq:CM2-gauge}
M(\lambda,t)=G(\gl,t)Y(\lambda,t),\qquad G(\gl,t)=
\begin{pmatrix}
\gl+y&1\\
1&0
\end{pmatrix}
\begin{pmatrix}
u^{-1/2}&0\\
0&u^{1/2}
\end{pmatrix}.
\end{equation}

\section{Contour of integration in the generalized Laplace\\ transform}
 \label{app:contour}
Here we explain how to define the contour of integration in equation~\eqref{eq:P2-integral-transform}. In an analogous way the reader can find the
contour of integration in the Laplace transform for the Fuchs--Garnier pairs for $P_1$ considered in Section~\ref{sec:P1} as well as contours of
integration for the Laplace transforms of the Fuchs-Garnier pairs for some other Painlev\'e equations considered in our previous
work~\cite{JKT2007}.

We start with some remarks on notation.  Let us denote the columns of the matrix $X$ by $X^k=X^k$, $k=1,2$, i.e., $X=(X^1,X^2)$.  We assume that
$X(\gl)$ is an integrable function of $\gl$.  The notation $L$ in equation \eqref{eq:P2-integral-transform}, as well as in the Laplace
transform~\eqref{eq:P1-laplace} considered in Section~\ref{sec:P1}, is understood to mean a pair of contours of integration for each column $X^k$:
\begin{equation}
 \label{eq:B-L}
L=(\mathcal{L}^1,\mathcal{L}^2),\quad\mathrm{so\;that}\quad \int_LX(\lambda)d\lambda=
\left(\int_{\mathcal{L}^1}X^1(\lambda)d\lambda,\int_{\mathcal{L}^2}X^2(\lambda)d\lambda\right).
\end{equation}
For brevity we call $L$ simply the contour of integration.  Assume that, for $k=1,2$, the $2\times2$ matrices $C_k$ and the $2\times2$ diagonal
matrices $D_k$ are independent of $\lambda$.  Then for any two integrable $2\times2$ matrix functions $X_k(\lambda)$ we have the following
identity:
\begin{equation*}
\int_L(C_1X_1(\lambda)D_1+C_2X_2(\lambda)D_2)d\lambda=C_1\int_LX_1(\lambda)d\lambda\,D_1+ C_2\int_LX_2(\lambda)d\lambda\,D_2.
\end{equation*}

Let us now commence our analysis with a discussion of some general issues related with the choice of the contour $L$ in
\eqref{eq:P2-integral-transform}. On one hand, we need a closed contour of integration because in Sections~\ref{sec:JKT2} and \ref{sec:dJKT2} we
assume vanishing of the certain off-integral terms that appear due to the integration by parts. On the other hand, the integrand in
\eqref{eq:P2-integral-transform} is an entire function of $\lambda$, thus the contour of integration cannot be closed in the finite domain of the
complex $\lambda$ plane and should pass through the point at infinity, since we would like to get a nontrivial function $W(\mu,t)$.  This brings
us immediately to the issues of convergence and conditions on $L$ which ensure that the integral does not vanish.

To cope with these two problems we have to consider in more detail the asymptotic behaviour of the fundamental solutions of the $JM_2$-pair. The
main instruments for this are the canonical solutions $Y_n(\lambda,t)$, $n\in\mathbb Z$ of $JM_2$-pair~\eqref{eq:FG-JM}, which are defined
(uniquely) by their asymptotic expansion as $\lambda\to\infty$,
\begin{equation}
 \label{eq:JM2-asymptotics}
Y_n(\gl,t)\sim\left(I+\mathcal{O}\Big(\frac1{\gl}\Big)\right)
\exp\left(\left(\gl^{3}/3+\gl t/2-\gt\log\gl\right)\gs_3\right),
\end{equation}
where $I$ is the identity matrix and $\log\lambda=\log|\lambda|+i\arg\lambda$, in the corresponding sectors
\begin{equation*}
\mathcal{S}_n=\left\{\gl\,:\,\frac{\pi}6+\frac{\pi(n-2)}3<\arg\lambda<\frac{\pi}6+\frac{\pi n}3\right\}.
\end{equation*}
The canonical solutions are related with each other by the Stokes matrices, $S_n$:
\begin{equation*}
Y_{n+1}(\gl)=Y_n(\gl)S_n,\qquad S_{2k+1}=
\begin{pmatrix}
1&s_{2k+1}\\
0&1
\end{pmatrix},\qquad
S_{2k}=
\begin{pmatrix}
1&0\\
s_{2k}&1
\end{pmatrix},
\end{equation*}
where the numbers $s_n\in\mathbb C$ are called the Stokes multipliers. Moreover,
\begin{equation*}
Y_{n+6}\big(\gl e^{2\pi i}\big)=Y_n(\gl)e^{-2\pi i\theta\sigma_3},\qquad n\in\mathbb Z.
\end{equation*}
>From the above definitions one can easily deduce that the Stokes matrices satisfy the so-called cyclic relation, although this is not important in
the following.  We now find the leading term of asymptotics of the integrand in \eqref{eq:P2-integral-transform} assuming that $Y$ coincides with
$Y_n$,
\begin{equation}
 \label{eq:integrand-Y}
\hat Y_n\equiv e^{-\gl^{3}/3+(\mu-t/2)\gl}\, Y_n(\gl,t) \underset{\substack{\lambda\to\infty \\
\lambda\in{\mathbb S}_n}}{\sim}
\begin{pmatrix}
\lambda^{-\theta}e^{\lambda\mu}&0\\
0&\lambda^{\theta}e^{\lambda\mu}e^{-2\gl^{3}/3-t\gl}
\end{pmatrix}.
\end{equation}
In the above asymptotics we have two exponential functions, namely, $e^{\lambda\mu}$ and $e^{-2\gl^{3}/3-t\gl}$. The contour of integration should
be chosen such that both exponents should vanish as $|\lambda|\to\infty$.  The ``Laplace exponent'', $e^{\lambda\mu}$, we make small as
$|\lambda|\to\infty$ by demanding that $\mathrm{Re}\,\lambda\mu<0$. Since the contour has two directions as $|\lambda|\to\infty$ we get two
conditions on $\mu$. To satisfy both conditions we must have the angle between these directions less than $\pi$. The directions themselves are
chosen such that the second exponent, $e^{-2\gl^{3}/3-t\gl}$, decays. It can be any directions inside of the following sectors:
$\mathcal{S}_{2k}\cap\mathcal{S}_{2k+1}$, $k\in\mathbb Z$. To achieve a better convergence the contour can be chosen asymptotic to the rays
$\{\lambda: \arg\lambda=2\pi n/3, n\in\mathbb Z\}$.

We have to take care that integrating a fundamental solution $Y(\gl,t)$ in equation~\eqref{eq:P2-integral-transform} we arrive at some fundamental
solution $W(\mu,t)$. This means that, between the asymptotic directions of our contour, an arch of circle centered at the origin and having a
large radius should cross a Stokes ray where the corresponding column of the fundamental solution $Y(\gl,t)$ is affected by the Stokes phenomenon.
If this condition is not satisfied then the column vector will be exponentially vanishing on that circle (as the radius enlarges to infinity) and,
by the Cauchy theorem, the the integral of that column taken along the contour also vanishes.  In fact, this condition actually means that the
above mentioned arch intersects {\bf two} Stokes rays of the {\bf matrix} solution.  In the sector between these Stokes rays our column is
unbounded as $|\lambda|\to\infty$, so that the Cauchy theorem does not apply.  Strictly speaking, after we choose the contour we also have to
prove that we obtain the fundamental solution $W(\mu,t)$ by, say, calculating its asymptotics as $\mu\to\infty$\footnote{This calculation should
be obvious for the experienced reader, we omit it here.}.

Now we apply the above principle to construct the contour $L$ in \eqref{eq:P2-integral-transform}. Consider, for
example, the canonical solution $Y_{2k}$, $k\in\mathbb Z$. The reader can check that defining contour $L=L_k$
with $\mathcal{L}^1_k=\mathcal{L}^2_k =\mathcal{L}_k$, where $\mathcal{L}_k$ is defined as any smooth
simple\footnote{The absence of selfintersections is not actually important.}
curve asymptotic to the rays $R^\varepsilon_{2k+1}$ and $R^\varepsilon_{2k+2}$, where
\begin{equation*}
\mathcal{R}^\varepsilon_n:=\{\lambda:\;\arg\lambda=-\frac\pi6+\frac{\pi n}3+(-1)^n\varepsilon\},
\qquad
n\in\mathbb Z\qquad\mathrm{and}\qquad
0<\varepsilon<\pi/3,
\end{equation*}
fits all the conditions indicated in the above paragraphs, provided the following condition is imposed on $\mu$:
\begin{equation}
 \label{eq:B-mu-condition}
\frac{\pi}3-\frac{2\pi k}3+\varepsilon<\arg\mu<\pi-\frac{2\pi k}3-\varepsilon.
\end{equation}
This condition comes from the imposing the exponential decay condition, $\pi/2<\arg\lambda\mu<3\pi/2$, for
asymptotics of $\hat Y_{2k}$ on the rays $R^\varepsilon_{2k+1}$ and $R^\varepsilon_{2k+2}$. Note that the
asymptotics on $R^\varepsilon_{2k+1}$ is given by equation~\eqref{eq:integrand-Y} for $n=2k$, while on
$\mathcal{R}^\varepsilon_{2k+2}$ the
Stokes phenomenon dictates the following leading term of asymptotics for $\hat Y_{2k}$,
\begin{equation*}
\hat Y_{2k} \underset{\gl \to \infty}{\sim}
\begin{pmatrix}
\lambda^{-\theta}e^{\lambda\mu} & 0\\
0 & \lambda^{\theta}e^{\lambda\mu}e^{-2\gl^{3}/3-t\gl}
\end{pmatrix}
\begin{pmatrix}
1 + s_{2k}s_{2k+1} & -s_{2k+1} \\
-s_{2k} & 1
\end{pmatrix}.
\end{equation*}

With this choice of the contour $L$ in \eqref{eq:P2-integral-transform} we construct the function $W(\mu)$
for all values of $\mu$ except the rays, $\mu:\;\arg\mu\neq\pi(1+2k)/3, k\in\mathbb Z$. On the latter rays
$W(\mu)$ can be obtained via the analytic continuation. We can, also, obtain $W(\mu)$ on these rays by a proper
choice of the contour $L$. In the latter case it splits into the two contours \eqref{eq:B-L}.

For example, assume $\arg\mu=\pi$. Construct the following solution $Y=(Y_2^2,Y_4^2)$, where $Y_2^2$ and $Y_4^2$
are the second columns of the canonical solutions $Y_2$ and $Y_4$, respectively. The function $Y$ is a fundamental
solution of system~\eqref{eq:FG-JM}, iff $s_3\neq0$. Consider now the following
anti-Stokes rays\footnote{It is the rays that originally were called the Stokes rays.}
\begin{equation*}
\mathcal{R}_n:=\{\lambda:\;\arg\lambda_0=2n\pi/3\},\qquad n=0,1,2.
\end{equation*}
Now we can define contour $L=(\mathcal{L}_1,\mathcal{L}_2)$ in \eqref{eq:P2-integral-transform} where
for $k=1,2$, $\mathcal{L}_k$ is any smooth simple\footnote{The absence of selfintersections is not actually important.}
curve with two asymptotic rays $R_k$ and $R_0$.

\end{document}